\newtheorem{theo}{Theorem}[section]
\newtheorem{ques}[theo]{Question}
\newtheorem{lem}[theo]{Lemma}
\newtheorem{claim}{Claim}
\newtheorem*{cl}{Claim}
\begin{document}

\title{A strengthening on odd cycles in graphs of given chromatic number}

\author{Jun Gao\thanks{School of Mathematical Sciences, USTC, Hefei, Anhui 230026, China. Email: gj0211@mail.ustc.edu.cn.}
\and
Qingyi Huo\thanks{School of Mathematical Sciences, USTC, Hefei, Anhui 230026, China. Email: qyhuo@mail.ustc.edu.cn.}
\and
Jie Ma\thanks{School of Mathematical Sciences, USTC, Hefei, Anhui 230026, China. Email: jiema@ustc.edu.cn.
Partially supported by National Key Research and Development Project SQ2020YFA070080, NSFC grant 11622110, and Anhui Initiative in Quantum Information Technologies grant AHY150200.}
}

\date{}

\maketitle

\begin{abstract}
Resolving a conjecture of Bollob\'{a}s and Erd\H{o}s, Gy\'{a}rf\'{a}s proved that every graph $G$ of chromatic number $k+1\geq 3$ contains cycles of $\lfloor\frac{k}{2}\rfloor$ distinct odd lengths.
We strengthen this prominent result by showing that such $G$ contains cycles of $\lfloor\frac{k}{2}\rfloor$ consecutive odd lengths.
Along the way, combining extremal and structural tools, we prove a stronger statement that every graph of chromatic number $k+1\geq 7$ contains $k$ cycles of consecutive lengths, except that some block is $K_{k+1}$.
As corollaries, this confirms a conjecture of Verstra\"ete and answers a question of Moore and West.
\end{abstract}

\section{Introduction}

One of the basic results in graph theory says that every graph of chromatic number $k$ contains a cycle of length at least $k$.
This leads to many natural studies on the relation between the chromatic number and the distribution of cycle lengths.
One cornerstone in this direction is the following celebrated theorem, which was conjectured by Bollob\'{a}s and Erd\H{o}s \cite{Erd92} and proved by Gy\'{a}rf\'{a}s \cite{Gyarfas}.

\begin{theo}[Gy\'{a}rf\'{a}s \cite{Gyarfas}]\label{Gyarfas}
Let $k\geq 2$ be an integer.
If $G$ is a graph of chromatic number $k+1$, then $G$ contains cycles of at least $\lfloor\frac{k}{2}\rfloor$ distinct odd lengths.
\end{theo}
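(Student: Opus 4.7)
The plan is to reduce to a $(k+1)$-critical graph and then extract many odd cycle lengths from a longest path by a parity-pigeonhole argument reinforced by P\'osa-style rotations.

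First I would pass to an edge-minimal subgraph of chromatic number $k+1$; without loss of generality $G$ is $(k+1)$-critical, so $\delta(G)\ge k$ and $G$ is $2$-connected. Let $P=v_0v_1\cdots v_\ell$ be a longest path in $G$. Since $P$ cannot be extended at $v_0$, every neighbor of $v_0$ lies on $P$; label them $v_{n_1},\ldots,v_{n_s}$ with $1=n_1<\cdots<n_s$ and $s\ge k$. Closing each edge $v_0v_{n_j}$ along the path prefix yields cycles $C_j=v_0v_1\cdots v_{n_j}v_0$ of $s\ge k$ distinct lengths $n_j+1$.

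By parity pigeonhole, among these $k$ distinct values at least $\lceil k/2\rceil\ge\lfloor k/2\rfloor$ share one parity. If that parity is odd, we are done. The main obstacle is the complementary case in which most $n_j$ are odd and so the cycles $C_j$ are predominantly even. To handle this, I would perform P\'osa rotations: for each $j\ge 2$ the path $P_j=v_{n_j-1}v_{n_j-2}\cdots v_0v_{n_j}v_{n_j+1}\cdots v_\ell$ is again a longest path in $G$, whose new endpoint $v_{n_j-1}$ again has $\ge k$ on-path neighbors by the same extension argument. The endpoint shift by one vertex flips the parity of the resulting cycle lengths compared with the original family, so a carefully chosen sequence of rotations should supply odd cycles complementing the even ones already produced.

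The crux is the final combinatorial bookkeeping: one must verify that the union of cycle-length sets arising from $P$ and its rotations contains $\lfloor k/2\rfloor$ distinct \emph{odd} values, rather than merely many lengths of mixed parity or many odd lengths that collide with each other. I would expect to address this by an iterative procedure that, whenever the current collection has too few odd lengths, performs a further rotation creating a new odd length not previously seen, and then bounds the number of times this step can fail in terms of $k$ and the odd lengths already produced.
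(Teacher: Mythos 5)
There is a genuine gap, and it sits exactly where you flag it yourself: the ``final combinatorial bookkeeping'' is not a technicality but the entire content of the theorem. After you pass to a $(k+1)$-critical subgraph, your argument uses only two facts: $G$ is $2$-connected and $\delta(G)\ge k$. These two hypotheses are also satisfied by $K_{k,k}$, which has no odd cycles whatsoever; so no amount of longest-path analysis, parity pigeonhole, or P\'osa rotation built on those two facts alone can produce even a single odd cycle. The non-bipartiteness of $G$ (indeed the full strength of $\chi(G)=k+1$, not merely $\delta\ge k$) must enter the argument in an essential, structural way, and your sketch never uses it. Concretely, the specific claim that ``the endpoint shift by one vertex flips the parity of the resulting cycle lengths'' is false: in a bipartite graph every $n_j$ is odd for every longest path and every rotation of it, so all the cycles $C_j$ remain even forever; and even in a non-bipartite graph, the cycle through a neighbor $v_m$ of the new endpoint $v_{n_j-1}$ has length depending on whether $m<n_j$ or $m>n_j$ along the rotated path, so the parities do not shift uniformly. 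The proposed ``iterative procedure that performs a further rotation creating a new odd length'' has no termination or success guarantee, and this is precisely the hard part of the theorem.

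For comparison: the paper does not prove this statement at all --- it is quoted from Gy\'arf\'as --- but it derives a strengthening (cycles of $\lfloor k/2\rfloor$ \emph{consecutive} odd lengths) by a completely different route that makes the use of non-bipartiteness explicit. For triangle-free graphs it takes a BFS tree, finds a layer whose induced subgraph has chromatic number at least $\lceil (k+1)/2\rceil$, passes to a critical (hence non-bipartite, $2$-connected) subgraph $H$ of that layer, and applies an $A$-$B$ path lemma whose exceptional case is exactly ``$H$ bipartite with bipartition $(A,B)$''; since all tree paths between the two sides have the same even length $2h$, the $A$-$B$ paths of every length $1,\dots,2\ell-1$ close up into cycles of consecutive lengths $2h+1,2h+2,\dots$, half of which are odd. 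For graphs with a triangle it runs a structural analysis around $K_4^-$ and maximal cliques together with a rooted-graph path theorem. If you want to salvage a longest-path proof of Gy\'arf\'as's theorem itself, you must first isolate where an odd cycle lives (e.g., a non-bipartite end-block) and anchor the path there, which is essentially what Gy\'arf\'as's original argument does.
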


This result has inspired extensive research.
Let $k\geq 2$ be an integer and $G$ be a graph of chromatic number $k+1$.
Mihok and Schiermeyer \cite{MS04} obtained an analogue for even cycles that every such $G$ contains cycles of at least $\lfloor\frac{k}{2}\rfloor-1$ distinct even lengths.
Confirming a conjecture of Erd\H{o}s \cite{Erd92}, Kostochka, Sudakov and Verstra\"ete \cite{KSV} showed that
if such $G$ does not contain a triangle, then it contains at least $\Omega(k^2\log k)$ cycles of consecutive lengths.
Recently, the authors and Liu \cite{GHLM} proved a conjecture of Sudakov and Verstra\"ete \cite{SV17}
that such $G$ contains $k-1$ cycles of consecutive lengths.
We remark that Theorem~\ref{Gyarfas} and these results of \cite{MS04,GHLM} are all tight for $G$ being the clique $K_{k+1}$.
For related results, we refer readers to \cite{BV98,HS98,V00,Fan02,SV08,Ma,LM}.

The aim of this note is to provide a common extension of Theorem \ref{Gyarfas} and the aforementioned results of \cite{MS04,GHLM} on cycles of consecutive lengths in graphs of given chromatic number.

\begin{theo}\label{mainresult}
Let $k\geq 6$ be an integer.
If $G$ is a graph of chromatic number $k+1$, then $G$ contains $k$ cycles of consecutive lengths, except that some block of $G$ is $K_{k+1}$.
\end{theo}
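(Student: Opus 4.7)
Since a graph's chromatic number equals the maximum chromatic number taken over its blocks and every cycle lies in a single block, it suffices to fix a block $B$ of $G$ with $\chi(B)=k+1$: if $B=K_{k+1}$ we are in the stated exception, so we may assume $B$ is $2$-connected and $B\ne K_{k+1}$, and the goal reduces to exhibiting $k$ consecutive cycle lengths in $B$. I would then pass to a $(k+1)$-critical subgraph $H\subseteq B$, which is automatically $2$-connected with $\delta(H)\ge k$, and split into two cases according to whether $H\cong K_{k+1}$.

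If $H\cong K_{k+1}$, then because $B\supsetneq H$ is $2$-connected, the ear-decomposition theorem supplies an ear $P$ of length $p\ge 2$ attached to $H$ at two distinct vertices $u,v\in V(H)$. Since $K_{k+1}$ is Hamilton-connected, for each $i\in\{1,\ldots,k\}$ it contains a $u$--$v$ path of length $i$; concatenating such a path with $P$ yields a cycle of length $p+i$, giving the $k$ consecutive cycle lengths $p+1,\ldots,p+k$ in $B$ and settling this case.

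If $H\ne K_{k+1}$, the main case, I would first apply the theorem of the authors and Liu \cite{GHLM} to $H$ to obtain $k-1$ consecutive cycle lengths, say $\ell,\ell+1,\ldots,\ell+k-2$; the task becomes producing one additional cycle of length $\ell-1$ or $\ell+k-1$. My plan is to reopen the proof of \cite{GHLM}, which builds these cycles from chords of a carefully chosen longest path anchored at a vertex of degree exactly $k$, and then exploit the extra flexibility provided by $\delta(H)\ge k$, $2$-connectedness, and the hypothesis $H\ne K_{k+1}$ to find one further chord, ear, or alternative routing that supplies the missing length. As advertised in the abstract, the argument should interleave extremal ingredients (counting chords or short cycles) with structural ones (ear decompositions and local neighborhood analysis).

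The principal obstacle is this last step: classifying the tightness configurations for the $(k-1)$-consecutive bound of \cite{GHLM} and eliminating each under the assumption $H\ne K_{k+1}$. Concretely, I would analyze a vertex $v$ of degree $k$ in $H$ together with the chord structure of a longest path or cycle through $v$, and try to show that failing to produce an additional cycle length forces $K_{k+1}$ to appear as a subgraph, hence as $H$ itself by criticality, contradicting the assumption. The lower bound $k\ge 6$ should emerge naturally from the pigeonhole and parity arguments needed in the subcase analysis, which become unavailable when $\lfloor k/2\rfloor$ is too small.
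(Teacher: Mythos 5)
Your reduction to a block $B$ with $\chi(B)=k+1$, the passage to a $(k+1)$-critical subgraph $H$ (which is $2$-connected with $\delta(H)\ge k$), and the ear/Hamilton-connectedness argument in the case $H\cong K_{k+1}$ are all correct and coincide with the final paragraph of the paper's proof. The problem is the main case $H\ne K_{k+1}$: there you do not give a proof, only a declared intention to ``reopen'' the argument of \cite{GHLM}, classify its tightness configurations, and extract one extra cycle length. That classification is precisely the hard content of the theorem, and nothing in \cite{GHLM} supplies it: the key engine there (Theorem~\ref{mainthm}, on $k$ admissible paths in $2$-connected rooted graphs of minimum degree $k+1$) produces \emph{admissible} sequences whose common difference may be two, so the extremal configurations are not confined to $K_{k+1}$ at the level of the intermediate lemmas, and ``one more chord or ear'' is not a well-defined step. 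As written, the principal case of the theorem is a research plan rather than an argument, so the proposal has a genuine gap.

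For comparison, the paper does not upgrade \cite{GHLM} by one; it splits on whether the critical subgraph $G'$ contains a triangle and proves two new self-contained results. If $G'$ has a triangle, Theorem~\ref{containK3} shows directly that any $2$-connected graph of minimum degree at least $k$ containing a $K_3$ has $k$ consecutive cycle lengths unless it equals $K_{k+1}$ (via a $K_4^-$, a maximal clique $K$, end-blocks of $G'-K$, and Theorem~\ref{mainthm} applied to rooted pieces). If $G'$ is triangle-free, Theorem~\ref{withoutK3} runs a BFS-level argument in the spirit of \cite{KSV}: some level induces a subgraph of chromatic number at least $\lceil(k+1)/2\rceil$, a critical subgraph of it contains a cycle of length at least $2\lceil(k+1)/2\rceil$ by Lemma~\ref{longcycle}, and the new $A$-$B$ path lemma (Lemma~\ref{A-B pathmodify}) converts this into $2\lceil(k+1)/2\rceil-1\ge k$ consecutive cycle lengths in one stroke, with no need to account for how \cite{GHLM} loses its one length. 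You would need to supply an argument of comparable substance for your main case before the proposal could be considered a proof.
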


Depending on if the graphs contain a triangle or not, we treat the proof of Theorem~\ref{mainresult} differently.
The proof for graphs without a triangle is motivated by \cite{KSV} and utilizes extremal arguments, where we use a new lemma on $A$-$B$ paths (see Lemma \ref{A-B pathmodify}).
On the other hand, the proof for graphs containing a triangle follows the line of \cite{GHLM} and relies on the structural analysis.

As an attempt to have a common generalization of the results of \cite{Gyarfas} and \cite{MS04},
Verstra\"ete conjectured in \cite[Conjecure XVI]{V16} that for any $k\geq 2$, if $G$ is a graph of chromatic number $k+1$,
then $G$ contains $k-1$ cycles of consecutive lengths which start with an odd number.
The case $k=2$ is obvious and the cases $k\geq 6$ follow as a direct corollary of Theorem~\ref{mainresult}.
The following is another result of this paper.

\begin{theo}\label{thmchromatic odd}
Let $k\geq 2$ be an integer.
If $G$ is a graph of chromatic number $k+1$, then there exists some $m$ such that $G$ contains $k-1$ cycles of lengths $2m+1,2m+2,\ldots,2m+k-1$, respectively.
\end{theo}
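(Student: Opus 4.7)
The plan is to split on $k$, reducing large values to Theorem~\ref{mainresult} and treating the few remaining small values by direct argument.

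For $k = 2$, the statement is immediate: a graph of chromatic number at least $3$ is non-bipartite and therefore contains an odd cycle, which supplies the single required cycle. For $k \geq 6$ I would invoke Theorem~\ref{mainresult}. If some block of $G$ is isomorphic to $K_{k+1}$, then that block contains cycles of every length in $\{3, 4, \ldots, k+1\}$, hence of the $k - 1$ consecutive lengths $3, 4, \ldots, k + 1$, which start at the odd value $3$ (take $m = 1$). Otherwise $G$ contains $k$ cycles of consecutive lengths $\ell, \ell + 1, \ldots, \ell + k - 1$, from which one simply selects a subinterval of length $k - 1$ beginning at an odd value: keep $\ell, \ldots, \ell + k - 2$ if $\ell$ is odd, or $\ell + 1, \ldots, \ell + k - 1$ if $\ell$ is even.

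The real work lies in the three small cases $k \in \{3, 4, 5\}$, which are not covered by Theorem~\ref{mainresult}. Here my plan is to begin with the theorem of \cite{GHLM}, which already guarantees $k - 1$ cycles of consecutive lengths $\ell, \ell + 1, \ldots, \ell + k - 2$ in any graph of chromatic number $k + 1$. If $\ell$ is odd, we are done. Otherwise, in the parity-bad case when $\ell$ is even, it would suffice to exhibit one additional cycle of odd length $\ell - 1$ or $\ell + k - 1$: either choice produces a window of $k$ consecutive lengths, from which we can extract $k - 1$ consecutive lengths starting at an odd value. Producing this one extra cycle is the main obstacle, and it appears to require case-by-case analysis for each of $k = 3, 4, 5$. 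My approach would be to first pass to a $(k+1)$-critical subgraph $H$ of $G$ (so $\delta(H) \geq k$) and then adapt the structural and extremal machinery developed in the present paper -- notably the new $A$-$B$ path lemma (Lemma~\ref{A-B pathmodify}) used for triangle-free graphs and the block analysis used in the triangle-containing case of Theorem~\ref{mainresult} -- to this low-chromatic regime, where the inequalities in the general proof lose their slack and some tailored bookkeeping becomes unavoidable.
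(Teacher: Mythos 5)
Your reduction for $k=2$ and $k\geq 6$ is exactly the paper's: $k=2$ is the existence of an odd cycle, and for $k\geq 6$ Theorem~\ref{mainresult} gives either a $K_{k+1}$ block (cycles of lengths $3,\ldots,k+1$, so $m=1$) or $k$ consecutive lengths from which one extracts $k-1$ starting at an odd value. The gap is in the small cases. For $k=5$ the paper does \emph{not} take your route of augmenting the $k-1$ consecutive cycles of \cite{GHLM} by one extra odd-length cycle; it proves Theorem~\ref{678} directly: assuming a $6$-critical graph has no four consecutive cycles starting with an odd length, one reduces to the $K_3$-free case via Theorem~\ref{containK3}, runs a breadth-first-search level decomposition, proves a structural claim that non-bipartite components of consecutive levels can only attach at ``good'' vertices (those outside the subgraph spanned by the $2$-connected blocks), and then exhibits an explicit proper $5$-coloring, contradicting criticality. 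Your proposed strategy is not a proof but a restatement of the difficulty: the theorem of \cite{GHLM} gives consecutive lengths $\ell,\ldots,\ell+k-2$ for \emph{some} $\ell$ you cannot control, and manufacturing a cycle of the one specific length $\ell-1$ or $\ell+k-1$ is essentially as hard as the original parity problem --- there is no indication that Lemma~\ref{A-B pathmodify} or the block analysis yields it without the genuinely new argument of Section~\ref{SEC678}.

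For $k=3,4$ you are in good company --- the paper itself only asserts these cases and defers them to a separate ancillary note requiring ``different techniques and a lengthy argument'' --- but within the present paper your proposal leaves $k=5$ unproved, whereas the paper's Section~\ref{SEC678} settles it completely. So the proposal as written establishes the theorem only for $k=2$ and $k\geq 6$.
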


Here we give a proof for every $k\geq 5$. Unfortunately the proof of Theorem~\ref{thmchromatic odd} for the cases $k=3,4$ requires different techniques and a lengthy argument,
which we present in a separate note and upload as an ancillary file to arXiv.

Using Theorem~\ref{mainresult}, we also can answer a recent question of Moore and West \cite{MW}.
A graph is $k$-{\it critical} if it has chromatic number $k$ but deleting any edge will decrease the chromatic number.
Moore and West \cite[Question 2]{MW} asked whether every $(k+1)$-critical non-complete graph has a cycle of length $2$ modulo $k$.
By Theorem~\ref{mainresult}, we can give an affirmative answer to this question in the following form.

\begin{theo}\label{thmMW}
For $k\geq 6$, every $(k+1)$-critical non-complete graph contains cycles of all lengths modulo $k$.
\end{theo}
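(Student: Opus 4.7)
\textbf{Proof plan for Theorem~\ref{thmMW}.} The strategy is to derive Theorem~\ref{thmMW} as an immediate consequence of Theorem~\ref{mainresult}, using the criticality hypothesis to rule out the exceptional alternative. Let $G$ be a $(k+1)$-critical non-complete graph with $k\ge 6$.

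The first step is to record that $G$ is $2$-connected, a classical feature of critical graphs with chromatic number at least three. Indeed, by criticality every proper subgraph of $G$ has chromatic number at most $k$; so if $v$ were a cut vertex of $G$ with $G-v$ having components $C_1,\ldots,C_t$, each $G_i:=G[V(C_i)\cup\{v\}]$ is a proper subgraph of $G$ and hence admits a proper $k$-coloring. After permuting colors on each $G_i$ so that $v$ receives the same color throughout, these colorings glue into a proper $k$-coloring of $G$, contradicting $\chi(G)=k+1$. Consequently $G$ is its own unique block.

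Next I would apply Theorem~\ref{mainresult} to $G$. This yields two alternatives: either $G$ contains $k$ cycles of consecutive lengths, or some block of $G$ is $K_{k+1}$. In the latter case the unique block of $G$ would have to equal $K_{k+1}$, forcing $G=K_{k+1}$ and contradicting the non-completeness of $G$. Hence $G$ contains cycles of consecutive lengths $\ell,\ell+1,\ldots,\ell+k-1$ for some $\ell$, and these $k$ consecutive integers realize every residue class modulo $k$, completing the proof.

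No serious obstacle arises here since Theorem~\ref{mainresult} does the technical work; the only subtle point is the $2$-connectedness of critical graphs, which is immediate whenever $k+1\ge 3$ and hence amply satisfied by $k\ge 6$.
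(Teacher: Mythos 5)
Your proposal is correct and matches the paper's (implicit) derivation: Theorem~\ref{thmMW} is stated there as a direct consequence of Theorem~\ref{mainresult}, with the exceptional $K_{k+1}$-block case excluded exactly because a critical graph of chromatic number at least three is $2$-connected and hence would have to equal $K_{k+1}$, contradicting non-completeness. The concluding observation that $k$ consecutive lengths cover all residues modulo $k$ is also exactly what is needed.
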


We remark that Theorem \ref{thmMW} also holds for $3\leq k\leq 5$.
The case $k=3$ follows by the results in \cite{CS,LY01,S92},
while the complete proof of the cases $k=4,5$ can be found in \cite{H21} which was submitted very recently.
More results related to cycle lengths modulo $k$ can be found in \cite{EH66,Tuza92,CMZ2015,GHLM}.

Returning back to Theorem~\ref{mainresult}, we now give an example to show that
the existence of $k$ cycles of consecutive lengths in graphs of chromatic number $k+1$ which do not contain $K_{k+1}$ is almost tight:
for $k\geq 3$, let $H_k$ be obtained by joining every vertex of the complete graph $K_{k-2}$ to every vertex of the cycle $C_5$.
Then $H_k$ has chromatic number $k+1$ and precisely $k+1$ cycles of consecutive lengths (namely $3, 4, \ldots, k+3$, respectively).
Note that $H_k$ is also $(k+1)$-critical.
It seems plausible that every non-complete $(k+1)$-critical graph contains $k+1$ cycles of consecutive lengths.
Moreover, we wonder if the following much stronger bound on consecutive cycle lengths can hold for $(k+1)$-critical graphs.

\begin{ques}
Let $k\geq 3$ be an integer. Is there a function $f_k(n)$ which goes to infinity as $n$ goes to infinity
such that every $n$-vertex $(k+1)$-critical graph contains $f_k(n)$ cycles of consecutive lengths?
\end{ques}

The rest of the paper is organized as follows.
In Section~\ref{SECpre}, we introduce the notation and some results in \cite{GHLM}.
In Section~\ref{sec:ABpath}, we give a new lemma on $A$-$B$ paths.
In Sections~\ref{SECtriangle} and \ref{SECnotriangle}, we consider graphs containing a triangle and graphs without a triangle of chromatic number at least seven, respectively.
In Section~\ref{SEC678}, we investigate graphs of chromatic number six.
In Section \ref{SECmain}, we complete the proofs of Theorems \ref{mainresult} and \ref{thmchromatic odd}.

\section{Preliminaries}\label{SECpre}

We follow the notation in \cite{LM,GHLM}.
Throughout the paper, we write $[k]$ for the set $\{1,2,...,k\}$ for a positive integer $k$.
Let $G$ be a graph.
For a non-trivial partition $(A,B)$ of $V(G)$,\footnote{A partition $(A,B)$ is {\it non-trivial} if each of $A$ and $B$ is non-empty.}
we say a path $P$ is an {\it $A$-$B$ path} if one end of $P$ is in $A$ and another is in $B$.
Let $P$ be a path in a graph $G$. Let $|P|$ be the number of edges in $P$.
We say $P$ is an {\it even} (respectively, {\it odd}) path if $|P|$ is an even (respectively, odd) number.
Let $C$ be a cycle with vertices $v_0,v_1,\ldots,v_{t-1}$ in cyclic order.
Let $C_{i,j}$ denote the subpath $v_iv_{i+1}\ldots v_j$ of $C$, where the indices are taken under the additive group $\mathbb{Z}_{t}$.

Let $H$ be a subgraph of $G$.
For a vertex $v \in V(G)$, let $N_G(v)$ be the neighborhood of $v$ in $G$.
We say that $H$ and a vertex $v\in V(G)-V(H)$ are {\it adjacent} in $G$ if $v$ is adjacent in $G$ to some vertex in $V(H)$.
We say that $v$ is a neighbor of $H$ if and only if $v$ and $H$ are adjacent.
Let $N_G(H)=\bigcup_{v\in V(H)}N_G(v)-V(H)$ be the neighborhood of $H$ in $G$ and $N_G[H]=N_G(H)\cup V(H)$ be the closed neighborhood of $H$ in $G$.
For $S\subseteq V(G)$, we say that a graph $G'$ {\it is obtained from $G$ by contracting $S$} into a vertex $s$, if $V(G')=(V(G)-S)\cup\{s\}$ and $E(G')=E(G-S)\cup\{vs: v\in V(G)-S$ is adjacent to $S$ in $G\}$.
For two vertex-disjoint subgraphs $H_1$ and $H_2$ of $G$.
Let $N_{H_1}(H_2):=N_G(H_2)\cap V(H_1)$.
A vertex $v$ of a connected graph $G$ is a {\em cut-vertex} of $G$ if $G-v$ contains at least two components.
A {\em block} $B$ in $G$ is a maximal connected subgraph of $G$ such that there is no cut-vertex of $B$.
Note that a block is an isolated vertex, an edge or a $2$-connected graph.
An {\em end-block} in $G$ is a block in $G$ containing at most one cut-vertex of $G$.

We say that $(G,x,y)$ is a {\it rooted graph} if $G$ is a graph and $x,y$ are two distinct vertices of $G$.
The {\it minimum degree} of a rooted graph $(G,x,y)$ is $\min \{d_G(v):v\in V(G)-\{x,y\}\}$.
We also say that a rooted graph $(G,x,y)$ is {\it $2$-connected} if $G+xy$ is $2$-connected.
We say that $k$ paths or $k$ cycles $P_1,P_2,\ldots,P_k$ are {\it admissible} if $|P_1|\geq 2$ and $|P_1|,|P_2|,\ldots,|P_k|$ form an arithmetic progression of length $k$ with common difference one or two.
We need the following results in \cite{GHLM}.

\begin{theo}[\cite{GHLM}, Theorem 3.1]\label{mainthm}
Let $k$ be a positive integer.
If $(G,x,y)$ is a $2$-connected rooted graph of minimum degree at least $k+1$,
then there exist $k$ admissible paths between $x$ and $y$ in $G$.
\end{theo}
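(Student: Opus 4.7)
My plan is to prove Theorem~\ref{mainthm} by induction on $|V(G)|$, with $|E(G)|$ as a secondary tiebreaker, working from a minimum counterexample $(G,x,y)$. The first task is to derive strong structural restrictions on such a minimum counterexample: every interior vertex has degree exactly $k+1$ (otherwise a superfluous edge can be deleted to produce a smaller counterexample whose admissible paths would lift back to $G$); no edge can be removed while preserving both rooted $2$-connectivity and the min-degree hypothesis; and any $2$-vertex cut of $G+xy$ is forced to sit near $\{x,y\}$, since otherwise one could contract one side of the cut and apply the inductive hypothesis to a strictly smaller rooted graph.

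Next I would analyze a longest $x$-$y$ path $P = x v_1 v_2 \cdots v_t y$ together with the neighborhoods of its first and last interior vertices. Maximality of $P$ forces $N_G(v_1), N_G(v_t)\subseteq V(P)$, so the $k+1$ neighbors of each endpoint produce many chords whose endpoints are distributed along $P$. Each chord $v_iv_j$ yields an alternative $x$-$y$ path whose length differs from $|P|$ in a controlled way, and combining several chords realizes every length in a substantial interval. A pigeonhole of Bondy--Vince flavor on the set of chord positions should then guarantee $k$ admissible paths, either with common difference one, or, if a parity obstruction appears, with common difference two.

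Two technical difficulties stand out. The first is a parity issue: when $G$ has a near-bipartite layered structure with respect to distances from $x$, single-chord modifications change the $x$-$y$ path length only by even amounts, so the arithmetic progression is forced to have common difference two; I would handle this by splitting into the cases ``some odd chord exists'' and ``every chord is even,'' and verifying the conclusion uniformly in each. The second, and in my view the main obstacle, is the rooted nature of the problem: vertex- or edge-deletions must preserve rooted $2$-connectivity (equivalently, $2$-connectivity of $G+xy$), which rules out naive local reductions near $x$ or $y$. To get around this I would use rooted contractions of subgraphs $S$ disjoint from $\{x,y\}$, carefully tracking how $x$-$y$ paths in $G/S$ lift to paths of predictable lengths in $G$ via fixed detours through $S$.

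The most delicate case, I expect, will be when $G$ is very close to an extremal example, for instance when $V(P)$ is short and $G$ is essentially a $K_{k+2}$ together with a few attachments at $x$ and $y$. Here the longest-path and chord machinery collapses because there are not enough chord positions to exploit, and one must instead verify by direct construction inside the dense block that $k$ admissible paths between $x$ and $y$ exist. Treating this extremal base case cleanly, and reconciling it with the rooted $2$-connectivity constraint, is where I anticipate the bulk of the combinatorial work will lie.
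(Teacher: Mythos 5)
This statement is not proved in the present paper at all: it is quoted verbatim from \cite{GHLM} (Theorem~3.1 there) and used as a black box, so the only proof to compare against is the one in \cite{GHLM}, which is a long structural induction occupying the technical core of that paper. Your proposal is a plan rather than a proof: every step that actually carries the weight of the theorem --- converting chord positions on a long $x$-$y$ path into a length-$k$ arithmetic progression of path lengths, resolving the parity obstruction, and making the reductions compatible with rooted $2$-connectivity --- is explicitly deferred (``I would handle this by\ldots'', ``where I anticipate the bulk of the combinatorial work will lie''). Flagging the right difficulties is not the same as overcoming them, and as written nothing is established.

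There are also two concrete problems with the plan itself. First, the claimed structural fact that maximality of a longest $x$-$y$ path $P=xv_1\cdots v_ty$ forces $N_G(v_1),N_G(v_t)\subseteq V(P)$ is false: $v_1$ is an \emph{interior} vertex of $P$, and a neighbor $w\notin V(P)$ of $v_1$ yields no longer $x$-$y$ path unless $w$ can be spliced back into $P$, which nothing guarantees. (This is exactly why arguments of this type work with longest paths from a single root, or with DFS/BFS trees, not with longest $x$-$y$ paths.) Second, even granting many chords, a Bondy--Vince-style pigeonhole on roughly $k+1$ chord positions cannot by itself produce $k$ path lengths forming an arithmetic progression with common difference one or two: it gives $k+1$ lengths with no control over consecutiveness, and extracting a full progression of length $k$ is precisely the content of the theorem. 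This is why the proof in \cite{GHLM} proceeds instead by a delicate induction on a decomposition of the rooted graph (contractions, block and cut analysis, and a notion of admissible path systems that is propagated through the reduction), rather than by analyzing a single longest path. In short, the approach as sketched would not close, and the genuinely hard cases you identify are left entirely open.
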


Define $K_4^-$ to be the graph obtained from $K_4$ by deleting one edge.

\begin{lem}[\cite{GHLM}, Lemma 5.2]\label{K_3noK_4-}
Let $k \geq 3$ and $G$ be a $3$-connected graph of minimum degree at least $k$.
If $G$ contains a $K_3$ but does not contain a $K_4^-$, then $G$ contains $k$ cycles of consecutive lengths.
\end{lem}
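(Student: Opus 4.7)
My plan is to fix a triangle $T = uvw$ in $G$, apply Theorem~\ref{mainthm} to the rooted graph $(G - w, u, v)$, and combine the admissible $uv$-paths produced in $G - w$ with the two short $uv$-paths in $G$ provided by the triangle, namely the edge $uv$ (length $1$) and the path $uwv$ (length $2$). Each admissible path $P$ in $G - w$ contributes a cycle of length $|P| + 1$ via the edge and a cycle of length $|P| + 2$ via $uwv$, so the triangle effectively doubles the number of consecutive cycle lengths one can extract from a family of admissible paths.

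First I would use the hypothesis that $G$ has no $K_4^-$ to establish the local structure at $T$. Since no edge of $G$ lies in two triangles, $N_G(u) \cap N_G(v) = \{w\}$, $N_G(u) \cap N_G(w) = \{v\}$, and $N_G(v) \cap N_G(w) = \{u\}$; in particular, every $z \in N_G(w) \setminus \{u, v\}$ is non-adjacent to both $u$ and $v$. Since $G$ is $3$-connected, $G - w$ is $2$-connected, so $(G - w, u, v)$ is a $2$-connected rooted graph. Moreover, for each non-root vertex $z$ of $G - w$, we have $d_{G - w}(z) \geq d_G(z) - 1 \geq k - 1$, with equality only when $z \in N_G(w)$ and $d_G(z) = k$.

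Next I would argue that $T$ can be chosen so that every vertex of $N_G(w) \setminus \{u, v\}$ has degree at least $k + 1$ in $G$. Under this choice the minimum degree of $(G - w, u, v)$ is at least $k$, and Theorem~\ref{mainthm} with parameter $k - 1$ produces $k - 1$ admissible $uv$-paths $P_1, \ldots, P_{k-1}$ in $G - w$ of lengths $\ell, \ell + d, \ldots, \ell + (k-2)d$ with $d \in \{1, 2\}$. Since each $P_i$ avoids $w$, combining with $uv$ yields a cycle of length $|P_i| + 1$ and combining with $uwv$ yields one of length $|P_i| + 2$. When $d = 1$, the two families together cover every length in $\{\ell + 1, \ldots, \ell + k\}$, giving $k$ consecutive cycle lengths; when $d = 2$, the interleaving covers every length in $\{\ell + 1, \ldots, \ell + 2k - 2\}$, which is already at least $k$ consecutive lengths for $k \geq 3$.

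The main obstacle is the selection of the triangle: showing that one can always find a $T$ whose apex has no neighbor of degree exactly $k$ outside $T$. If no such $T$ exists, then every triangle has each of its three vertices with a neighbor (outside the triangle) of degree exactly $k$, placing very strong restrictions on the local structure of $G$. In this residual case I expect to complete the argument by a bare-hands construction that uses the forced low-degree vertices, the $3$-connectivity of $G$, and the $K_4^-$-free hypothesis to produce the required cycles directly, possibly via Menger-type path manipulations in the spirit of Lemma~\ref{A-B pathmodify}.
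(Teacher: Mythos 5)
First, a point of reference: this paper does not actually prove Lemma~\ref{K_3noK_4-} --- it is imported verbatim from \cite{GHLM} (Lemma 5.2 there), so there is no in-paper proof to compare your argument against. Judged on its own terms, your skeleton is sound and very much in the spirit of how this paper uses Theorem~\ref{mainthm} elsewhere (compare the proof of Theorem~\ref{containK3}): delete the apex $w$ of a triangle, extract admissible $u$--$v$ paths from the $2$-connected rooted graph $(G-w,u,v)$, and close each one with either the edge $uv$ or the path $uwv$ to double up the lengths. Your structural observations from $K_4^-$-freeness and your length arithmetic in both the common-difference-one and common-difference-two cases are correct.

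The problem is the triangle-selection step, and it is not a loose end but the heart of the matter. You need every vertex of $N_G(w)\setminus\{u,v\}$ to have degree at least $k+1$ so that $(G-w,u,v)$ has minimum degree at least $k$; with only the guaranteed minimum degree $k-1$, Theorem~\ref{mainthm} yields just $k-2$ admissible paths, and in the common-difference-one case your two closings then cover only the $k-1$ lengths $\ell+1,\dots,\ell+k-1$ --- one short. But a triangle with the required property need not exist: in any $k$-regular graph satisfying the hypotheses every vertex has degree exactly $k$, so every apex of every triangle has offending neighbours. Such graphs exist (for $k=3$, the triangular prism is $3$-connected, $3$-regular, contains $K_3$ and no $K_4^-$, and defeats the selection at every triangle), so your ``residual case'' swallows, among other things, the entire regular case. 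For that case the proposal offers only the hope of a ``bare-hands construction,'' with no actual argument; as written the proof is therefore incomplete. To repair it you would need a reduction that does not lose degree at the neighbours of the deleted vertex --- for instance, exploiting that $K_4^-$-freeness forces every vertex outside the triangle to have at most one neighbour on it, so that contracting $V(T)$ preserves all outside degrees --- or a genuine analysis of triangles whose apex has degree-$k$ neighbours.
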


\section{A new lemma on $A$-$B$ paths}\label{sec:ABpath}
The following well-known lemma is due to Bondy and Simonovits \cite{BS} and, independently, Verstra\"ete \cite{V00}.

\begin{lem}[\cite{BS,V00}]\label{A-B path}
Let $G$ be a graph comprising a cycle with a chord and $(A,B)$ be a non-trivial partition of $V(G)$.
Then $G$ contains $A$-$B$ paths of every length less than $|V(G)|$, unless $G$ is bipartite with the bipartition $(A,B)$.
\end{lem}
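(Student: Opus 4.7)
The plan is to exploit the fact that $G$, a cycle with a chord, is a theta graph. Writing the chord as $uv$, the cycle $C$ splits into two internally disjoint $u,v$-paths $P_1$ and $P_2$ of lengths $a$ and $b$, with $a,b\ge 2$ and $a+b=n:=|V(G)|$; together with the chord $P_3=uv$, these form three internally disjoint $u,v$-paths, whose pairwise unions are the three cycles of $G$ of lengths $n$, $a+1$, and $b+1$. I would set up notation so that every vertex is identified by its position on one of $P_1,P_2$, and every path of $G$ is described by its endpoints together with a choice of whether it traverses the chord.

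Since $(A,B)$ is non-trivial and fails to be a bipartition of $G$, one can fix a monochromatic edge $e=xy$ (say $x,y\in A$) and a witness $z\in B$. For every target length $\ell\in\{1,\ldots,n-1\}$, I would exhibit an $A$-$B$ path of length $\ell$ by a direct theta-structural construction. The key flexibility is this: for any two vertices $s,t$ of $G$ and any realizable length $\ell$ between them, there is typically a choice between a direct route inside $C$ and a rerouting through the chord $uv$, with the two choices differing in length by $a-1$ or $b-1$. Using this, one can fine-tune the length of an $A$-$A$ path so as to shift its endpoint into $B$. The construction naturally splits on whether $e\in E(C)$ or $e=uv$, and within each case on the size of $\ell$ relative to $a$ and $b$: for small $\ell$ I would start from an endpoint of $e$ and walk outwards along $C$, toggling the chord when the naive endpoint lies in $A$; for $\ell$ close to $n-1$ I would use near-Hamiltonian routes and detour through the chord to adjust the length by one.

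The main obstacle is avoiding an unwieldy enumeration of subcases. To compress it, I would aim for an extremal/inductive scheme: the base case $\ell=1$ is immediate from non-triviality and connectedness of $G$, and, given an $A$-$B$ path of length $\ell$, I would produce one of length $\ell+1\le n-1$ by either extending at an endpoint, or, when both one-edge extensions stay inside the current class, by substituting a short subpath for a one-edge-longer detour through the chord. The delicate point, and the main technical work, is to verify that at least one of these moves is always available when $\ell+1\le n-1$; this is precisely where the monochromatic edge $e$ and the existence of three internally disjoint $u,v$-paths both enter, since failure of every available move would propagate monochromaticity from $e$ throughout $G$ and contradict $z\in B$.
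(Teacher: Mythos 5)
The paper does not prove this lemma at all: it is quoted as a known result of Bondy--Simonovits and Verstra\"ete \cite{BS,V00}, so there is no in-paper argument to compare yours against; it has to be judged on its own terms.

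On its own terms, your proposal has a genuine gap. The set-up is right (the theta structure with arcs of lengths $a$ and $b$ plus the chord, a monochromatic edge forced by the failure of $(A,B)$ to be a bipartition, a crossing edge giving the case $\ell=1$), but the entire content of the lemma is the step you explicitly defer, namely ``verify that at least one of these moves is always available.'' Moreover, the specific fallback move you propose for the inductive step is generally unavailable: in a cycle with a chord, swapping the chord for an arc (or one arc for the other) changes a path's length by $a-1$, $b-1$ or $|a-b|$, and two internally disjoint routes between the same pair of vertices differ in length by exactly one only for special vertex pairs in special positions, not uniformly. So when both one-edge extensions of a given $A$-$B$ path of length $\ell$ land in the wrong class, or are impossible because the endpoints' remaining neighbours already lie on the path, your induction has no legal move, even though an $A$-$B$ path of length $\ell+1$ may exist with entirely different endpoints and a different route. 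Passing from length $\ell$ to $\ell+1$ genuinely requires relocating the endpoints, and controlling which side of the partition the new endpoint falls on is precisely the case analysis (on where the $A$-$B$ boundary sits relative to $u$, $v$ and the two arcs) that the proofs in \cite{BS,V00} carry out and that your sketch replaces with the assertion that ``failure of every available move would propagate monochromaticity.'' That propagation argument is the theorem; as written, the proposal is a plausible plan rather than a proof.
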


We prove a modified version of Lemma \ref{A-B path} as follows.

\begin{lem}\label{A-B pathmodify}
Let $G$ be a connected graph of minimum degree at least three and $(A,B)$ be a non-trivial partition of $V(G)$.
For any cycle $C$ in $G$, there exist $A$-$B$ paths of every length less than $|V(C)|$ in $G$, unless $G$ is bipartite with the bipartition $(A,B)$.
\end{lem}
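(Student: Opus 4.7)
The plan is to deduce Lemma \ref{A-B pathmodify} from Lemma \ref{A-B path} via a case analysis, using the minimum degree and connectedness of $G$ to enrich the cycle $C$ into a suitable subgraph on which the Bondy--Simonovits argument applies, or to build the required $A$-$B$ paths directly by splicing subpaths of $C$ with short excursions outside $V(C)$.

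I would first fix any target length $\ell$ with $1\le\ell<|V(C)|$ and look for an $A$-$B$ path of length $\ell$. If some $C$-subpath of length $\ell$ already has its endpoints in distinct parts of $(A,B)$, we are done; otherwise this fails for every starting position on $C$, which forces the $2$-coloring of $V(C)$ induced by $(A,B)$ to be periodic of period dividing $\gcd(|V(C)|,\ell)$. Next, if $C$ has a chord $e$ such that $C\cup e$ is non-bipartite w.r.t.\ the induced partition, applying Lemma \ref{A-B path} to $C\cup e\subseteq G$ directly yields $A$-$B$ paths of every length less than $|V(C)|$, finishing the argument. Hence we may assume no such chord exists.

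In the remaining case, the global non-bipartiteness of $G$ produces a bad edge $e^\ast=uv$ (with $u,v$ in the same part) which, by our assumption, is not a chord of $C$, so at least one endpoint of $e^\ast$ lies off $V(C)$. Using $\delta(G)\ge 3$ and connectedness, I would build a subgraph $H\subseteq G$ containing both $C$ and $e^\ast$: either an ear $P$ of $C$ through $e^\ast$, yielding a theta $H=C\cup P$, or, when $G$ admits no such ear, a pendant subgraph $K$ attached to $V(C)$ through a single cut-vertex and containing $e^\ast$. In the theta case, a theta-analogue of Lemma \ref{A-B path} (provable by the same walk-counting type argument, using the detour through the third side of the theta to flip parity) supplies the desired $A$-$B$ paths inside $H$. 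In the pendant case, the required paths are obtained by splicing $C$-subpaths of length $\ell-k$ with simple paths of length $k$ inside $K$ that traverse $e^\ast$ and terminate in the part needed to give an overall $A$-$B$ path of length $\ell$.

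The main obstacle is the pendant case, which does not reduce to the theta-analogue because $K$ may share only a single cut-vertex with $V(C)$ and so cannot give rise to any ear of $C$. Here one must construct, for each required $\ell$, a simple path from $V(C)$ into $K$ of prescribed length and prescribed terminal part; the minimum degree $\ge 3$ provides enough branching inside $K$ to tune the length freely while forcing the extension to cross a bad edge and realize the needed parity flip. Keeping the extension a simple path (no vertex repetition), especially when the bad edge lies deep inside $K$, is the delicate combinatorial heart of the proof.
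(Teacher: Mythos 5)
There is a genuine gap in your case analysis. After disposing of chords, you write that the bad edge $e^\ast=uv$ ``is not a chord of $C$, so at least one endpoint of $e^\ast$ lies off $V(C)$.'' This inference is false: an edge that is not a chord of $C$ can be an edge \emph{of} $C$ itself, with both endpoints on $V(C)$. This missed configuration --- $C$ induced (chordless) and some edge of $C$ having both ends in the same part, e.g.\ an induced $C_5$ with $v_0,v_1\in A$ and $v_2,v_3,v_4\in B$, while $G-E(C)$ happens to be properly $2$-colored by $(A,B)$ --- is not reachable by any of your three mechanisms (chord, theta through $e^\ast$, pendant block containing $e^\ast$), and it also swallows the unaddressed extreme case $V(C)\subseteq A$. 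It is precisely here that the paper's proof does its essential work: since $C$ is induced and $\delta(G)\geq 3$, an endpoint $v_0$ of the internal cycle-edge has a neighbor $x\notin V(C)$, and depending on whether $x\in A$ or $x\in B$ one splices $xv_0$ with arcs of $C$ run in the two directions to realize every length below $|V(C)|$. Your proposal never uses the minimum-degree hypothesis in this way, which is the main point of strengthening Lemma~\ref{A-B path}.

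Two further weaknesses, short of outright errors: the ``theta-analogue'' of Lemma~\ref{A-B path} is asserted rather than proved, and the pendant case, which you yourself flag as the ``delicate combinatorial heart,'' is left open. The paper avoids both: when every edge of $G[V(C)]$ is crossing it takes a path $Q'$ from an internal edge $f$ to $C$, internally disjoint from $C$ and with all edges after $f$ crossing; then $Q'\cup C_{0,s}$ is a single path whose first edge is internal and whose remaining edges all cross, and its initial subpaths from its first two vertices already realize $A$-$B$ paths of every length up to $|V(C)|$. No length-tuning inside a pendant block is needed. As it stands, the proposal does not constitute a proof.
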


\begin{proof}
We assume that $(A,B)$ is not a bipartition of $G$ (even if $G$ is bipartite). Let $C:=v_0v_1\ldots v_s$.

First consider that $V(C)\subseteq A$.
Since $G$ is connected, there exists a path $Q$ which links some $u\in B$ and a vertex say $v_0$ in $C$ internally disjoint from $V(C)$.
We may assume that $V(Q)-u\subseteq A$ and write $Q=x_0x_1\ldots x_{t}$, where $x_0=v_0$ and $x_t=u$.
It is straightforward to see that $x_ix_{i+1}\ldots x_t$ and $C_{0,j}\cup Q$ for $0\leq i\leq t-1$ and $j\in [s]$ form $A$-$B$ paths of every length less than $|V(C)|+|V(Q)|-1$.

So we may assume that $V(C) \cap B \ne \emptyset$ and similarly, $V(C) \cap A \ne \emptyset$.
Then $(A\cap V(C),B\cap V(C))$ is a non-trivial partition of $V(C)$.
We say that an edge in $G$ is {\it crossing} if one of its endpoints is in $A$ and the other is in $B$, and {\it internal} otherwise.

Suppose that there is an internal edge in $G[V(C)]$.
Then $(A\cap V(C),B\cap V(C))$ is not a bipartition of $G[V(C)]$.
By Lemma~\ref{A-B path}, we may assume that $C$ is an induced cycle in $G$.
So $E(C)$ has an internal edge.
Since $(A\cap V(C),B\cap V(C))$ is non-trivial, there is also a crossing edge in $E(C)$.
Without loss of generality, we can suppose that $v_{s}\in A$ and $v_0,v_1\in B$.
As $\delta(G)\geq 3$ and $C$ is an induced cycle, $v_0$ has a neighbor $x\in V(G)\setminus V(C)$.
If $x\in A$, then one of $C_{j,0}\cup xv_0$ and $C_{j,1}$ is an $A$-$B$ path of length $s-j+2$ for each $2\leq j\leq s$;
otherwise $x\in B$, then one of $xv_0\cup C_{0,j}$ and $C_{s,j}$ is an $A$-$B$ path of length $j+1$ for each $1\leq j\leq s-1$.
Note that a crossing edge is an $A$-$B$ path of length $1$, such an edge exists as $(V(C)\cap A,V(C)\cap B)$ is a non-trivial partition.
Hence in either case, $G$ contains $A$-$B$ paths of every length less than $|V(C)|$.

Therefore, we may assume that every edge in $G[V(C)]$ is crossing.
This shows that $s$ is odd.
Since $(A,B)$ is not a bipartition of $G$, there exists at least one internal edge in $G-E(G[V(C)])$.
By the connectedness of $G$, there exists a path $Q'$ starting with an internal edge $f$ and ending with a vertex say $v_0$ in $C$
such that $Q'$ is internally disjoint from $V(C)$ and all edges in $E(Q')-f$ are crossing.
Then $Q'\cup C_{0,s}$ is a path of length at least $s+1$, whose first edge is internal and all other edges are crossing.
It is not hard to see that $Q'\cup C_{0,s}$ contains subpaths which are $A$-$B$ paths of every length at most $s+1=|V(C)|$.
This finishes the proof of Lemma \ref{A-B pathmodify}.
\end{proof}

\section{Graphs containing a triangle}\label{SECtriangle}

We devote this section to a sharp result on consecutive cycles in graphs containing a triangle.
This improves some results in \cite[Lemmas 5.1 and 5.2 ]{GHLM} to $2$-connected graphs.

\begin{theo}\label{containK3}
	Let $k\geq 2$ be an integer.
	Every $2$-connected graph $G$ of minimum degree at least $k$ containing a triangle $K_3$ contains $k$ cycles of consecutive lengths, except that $G=K_{k+1}$.
\end{theo}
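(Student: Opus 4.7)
The plan is to proceed by induction on $|V(G)|$, considering a minimum counterexample $G$: $2$-connected, $\delta(G)\geq k$, containing a triangle, not equal to $K_{k+1}$, but admitting no $k$ cycles of consecutive lengths. The analysis naturally splits on the connectivity of $G$.

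For the $3$-connected case, I would distinguish by whether $G$ contains $K_4^-$. If not, Lemma~\ref{K_3noK_4-} gives the conclusion directly. If $G$ contains a $K_4^-$ on $\{x,y,z,w\}$ with edges $xy, xz, yz, xw, yw$, consider the rooted graph $(G-xy, x, y)$. Because $G$ is $3$-connected, $G-xy$ is $2$-connected, and its minimum degree over $V(G)\setminus\{x,y\}$ is still at least $k$. Theorem~\ref{mainthm} with parameter $k-1$ then yields $k-1$ admissible paths $P_1,\dots,P_{k-1}$ from $x$ to $y$ of lengths $\ell_1<\dots<\ell_{k-1}$ forming an arithmetic progression with common difference $d\in\{1,2\}$ and $\ell_1\geq 2$. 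Each $P_i$ generates a cycle of length $\ell_i+1$ by appending the edge $xy$, and a cycle of length $\ell_i+2$ by appending the length-$2$ path $xzy$ (respectively $xwy$) whenever $z$ (respectively $w$) is absent from the interior of $P_i$. Combining these closures gives, in the $d=1$ case, the $k$ consecutive lengths $\ell_1+1,\dots,\ell_1+k$, and in the $d=2$ case, $2k-2$ consecutive lengths; the corner case in which some $P_i$ contains both $z$ and $w$ is addressed by a truncation or rerouting argument, e.g.\ using the edge $zw$ when the $K_4^-$ is actually a $K_4$, or splitting $P_i$ at $z$ or $w$.

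For $G$ that is $2$-connected but not $3$-connected, I would use a $2$-cut decomposition. Fix a $2$-cut $\{u,v\}$ and write $G=G_1\cup G_2$ with $V(G_1)\cap V(G_2)=\{u,v\}$; set $G_i^+=G_i+uv$. Each $G_i^+$ is $2$-connected with $\delta(G_i^+)\geq k$, and without loss of generality the triangle lies in $G_1$. Applying the induction hypothesis to $G_1^+$ (which has strictly fewer vertices than $G$, since $V(G_2)\setminus\{u,v\}\neq\emptyset$), either $G_1^+=K_{k+1}$, or $G_1^+$ contains $k$ cycles of consecutive lengths. In the latter case, cycles in $G_1^+$ not using $uv$ are already cycles in $G$, while cycles through $uv$ are transferred to $G$ by replacing $uv$ with a $u$-$v$ path in $G_2$; applying Theorem~\ref{mainthm} to $(G_2^+, u, v)$ to produce admissible $u$-$v$ paths of several consecutive lengths in $G_2$ provides the flexibility to realign the lengths into a run of $k$ consecutive integers. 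The special case $G_1^+=K_{k+1}$ is treated separately by exploiting the Hamiltonian structure of the clique $G_1$ combined with suitably chosen $u$-$v$ paths in $G_2$.

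The main obstacle is the $2$-cut reduction, especially the case $G_1^+=K_{k+1}$, which corresponds precisely to the stated exception in the theorem: since $G$ itself is not a $K_{k+1}$ block, the required $k$ consecutive cycle lengths must emerge from the interaction between $G_1$ and $G_2$, and careful bookkeeping is needed to ensure the cycle lengths align into an interval of size exactly $k$. A secondary technical difficulty lies in the $3$-connected $K_4^-$ branch: admissible paths may simultaneously contain both $z$ and $w$, in which case neither natural triangle-closure gives a new cycle, and one must invoke a structural or rerouting argument to recover the missing length.
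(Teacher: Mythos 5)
Your high-level skeleton overlaps with the paper's in places (the dichotomy on $K_4^-$, the use of Lemma~\ref{K_3noK_4-} and Theorem~\ref{mainthm}), but both branches have genuine gaps at exactly the points you flag as technical difficulties, and in both cases the paper resolves them by a different device rather than by patching the argument you sketch. In the $2$-cut branch, the claim $\delta(G_i^+)\geq k$ is false in general: the cut vertices $u,v$ may have almost all of their neighbours on the other side of the cut, so $G_1^+$ need not satisfy the degree hypothesis and the induction cannot be invoked (nor is a base case set up). The paper avoids induction entirely: since the triangle is a clique it lies in one side, say $G[X\cup S]$, and two disjoint paths from $S$ to the triangle yield two $s_1$--$s_2$ paths of consecutive lengths there; Theorem~\ref{mainthm} is then applied to the rooted graph $(G[Y\cup S],s_1,s_2)$ on the \emph{other} side, whose rooted minimum degree (which by definition excludes $s_1,s_2$) genuinely is at least $k$, producing $k-1$ admissible paths whose concatenation with the two triangle-side paths gives $k$ consecutive cycle lengths.

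In the $3$-connected branch, the case in which an admissible $x$--$y$ path contains both $z$ and $w$ in its interior is not a removable corner case: then neither $xzy$ nor $xwy$ can be appended, the edge $zw$ need not exist, and splitting $P_i$ at $z$ or $w$ produces a cycle of uncontrolled length, so an entire residue class of lengths can be lost and your run of consecutive cycle lengths breaks. The paper's fix is structural rather than a rerouting: it takes a maximal clique $K\supseteq\{v_3,v_4\}$ in $G-\{v_1,v_2\}$ of size $t$, disposes of $t\geq k-1$ directly, and for $t\leq k-2$ generates the admissible paths by applying Theorem~\ref{mainthm} inside the component $F$ of $G-K$ containing $v_1,v_2$ (or inside an end-block of $F$ after contracting its attachment set in $K$), so that these paths are automatically vertex-disjoint from $K$ and can be freely concatenated with the $t+1$ paths of lengths $1,\dots,t+1$ running through $K$; the maximality of $K$ is precisely what guarantees the rooted minimum degree bound $k-t+1$ needed for Theorem~\ref{mainthm} there. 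You would need to import some such separation --- or another mechanism keeping the admissible paths off $\{v_3,v_4\}$ --- for your closure argument to go through.
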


\begin{proof}
	Assume that $G$ is a 2-connected graph with $\delta(G)\geq k$ such that it contains a triangle and $G\neq K_{k+1}$.
	First, suppose that $G$ contains a separating set $S=\{s_1,s_2\}$.
	Let $X$ and $Y$ form a partition of $V(G)-S$, which are separated by $S$ in $G$.
	Let $T_0$ be a $K_3$ in $G$ and denote $V(T_0)$ by $\{u_1,u_2,u_3\}$.
	Since $T_0$ is a clique, we may assume that $T_0$ is contained in $G[X\cup S]$.
	Note that $(G[X \cup S],s_1,s_2)$ is $2$-connected.
	There exist two disjoint paths $L_1,L_2$ from $S$ to $V(T_0)$ internally disjoint from $V(T_0)$ in $G[X\cup S]$.
	Without loss of generality, we may assume that $L_i$ links $s_i$ and $u_i$ for $i=1,2$.
	So $L_1':=L_1\cup u_1u_2\cup L_2,\ L_2':=L_1\cup u_1u_3u_2\cup L_2$ are $2$ paths of consecutive lengths from $s_1$ to $s_2$ in $G[X\cup S]$.
	Also, it is easy to check that $(G[Y\cup S],s_1,s_2)$ is a $2$-connected rooted graph of minimum degree at least $k$.
	Then by Theorem \ref{mainthm}, there exist $k-1$ admissible paths $P_1,P_2,...,P_{k-1}$ from $s_1$ to $s_2$ in $G[Y\cup S]$.
	Concatenating each of these paths with $L_1',L_2'$, we obtain $k$ cycles of consecutive lengths.
	
	Therefore, we may assume that $G$ is $3$-connected.
	So $k\geq 3$.
	If $G$ does not contain a $K_4^-$, then by Lemma \ref{K_3noK_4-}, $G$ contains $k$ cycles of consecutive lengths.
	Therefore, $G$ contains a $K_4^-$.
	Let $T_1$ be a $K_4^-$ in $G$ with $V(T_1)=\{v_1,v_2,v_3,v_4\}$, where $v_1$ has degree two in $T_1$ and $v_2$ is a neighbor of $v_1$.
	Let $K$ be a maximal clique in $G-\{v_1,v_2\}$ containing $v_3,v_4$ and let $t=|V(K)|$.
	We may assume that $t\leq k-1$ (as, otherwise, we can easily find $k$ cycles of lengths $3,4,\ldots,k+2$, respectively).
	
	Suppose that $t=k-1$.
	Since $G$ has minimum degree at least $k$ and $G\neq K_{k+1}$, we may assume that $|V(G)|\geq k+2$.
	Let $H:=G[V(K)\cup\{v_1,v_2\}]$.
	Let $x$ be a vertex in $G-H$.
	Since $G$ is $3$-connected, there exist three internally disjoint paths $M_1,M_2,M_3$ from $x$ to $V(H)$ internally disjoint from $H$.
	Let $y_i$ be the end of $M_i$ in $H$ for $i=1,2,3$.
	It is easy to see that there exists an $i$ and $j$ where $i\ne j$ such that there are $k$ paths $ P_1,\ldots,P_k$ which up to relabelling, having lengths $1,2,\ldots,k$ from $y_i$ to $y_j$ in $H$, respectively.
	Concatenating each of these paths with $M_i \cup M_j$, we obtain $k$ cycles of consecutive lengths.
	
	Therefore, $t\leq k-2$.
	Let $F$ be the component of $G-K$ containing $\{v_1,v_2\}$.
	By the maximality of $K$, every vertex in $G-(K \cup \{v_1,v_2\})$ has at most $t-1$ neighbors in $K$.
	So $\delta((F,v_1,v_2))\geq k-t+1$.
	
	Suppose that $F$ is $2$-connected.
	Then $(F,v_1,v_2)$ is a $2$-connected rooted graph of minimum degree at least $k-t+1$.
	By Theorem \ref{mainthm}, there exist $k-t$ admissible paths $Q_1,Q_2,...,Q_{k-t}$ from $v_1$ to $v_2$ in $F$.
	Note that there exist $t+1$ paths of length $1,2,\ldots,t+1$ from $v_1$ to $v_2$ in $G[K \cup \{v_1,v_2\}]$, respectively.
	Concatenating each of these paths with $Q_i$ for $i\in [k-t]$, we obtain $k$ cycles of consecutive lengths.
	
	Therefore $F$ is not $2$-connected.
	Suppose that there is a leaf $y$ in $F$.
	Then $y$ has at least $k-1$ neighbors in $K$.
	It follows that $|K|\geq k-1$, a contradiction.
	So $|V(F)|\geq3$ and every end-block of $F$ is $2$-connected.
	Let $B$ be an end-block of $F$ with cut-vertex $b$ such that $v_1,v_2\notin V(B)-b$.
	Let $G_1$ be the graph obtained from $G[B\cup (N_G(B-b)\cap K)]$ by contracting $N_G(B-b)\cap K$ into a vertex $w$.
	It is clear that $(G_1,w,b)$ is a $2$-connected rooted graph of minimum degree at least $k-t+2$.
	By Theorem \ref{mainthm}, there exist $k-t+1$ admissible paths from $w$ to $b$ in $G_1$.
	Hence, $G$ contains $k-t+1$ admissible paths $R_i$ from a vertex $p_i\in N_G(B-b)\cap K$ to $b$ for $i\in [k-t+1]$ internally disjoint from $K\cup (F-(B-b))$.
	Let $L$ be a fixed path from $b$ to $\{v_1,v_2\}$ in $F-(B-b)$.
	Without loss of generality, we may assume that $L$ links $b$ and $v_1$.
	Note that there exist $t$ paths from $v_1$ to $p_i$ in $G[K \cup \{v_1,v_2\}]$ with lengths $2,3,\ldots,t+1$, respectively, for each $i\in [k-t+1]$.
	Concatenating these paths with $R_i\cup L$, we obtain $k$ cycles of consecutive lengths.
	This proves Theorem~\ref{containK3}.
\end{proof}

\section{Graphs without a triangle}\label{SECnotriangle}
In this section, we prove the following result on {\it $K_3$-free} graphs.\footnote{A graph is $K_3$-free if it does not contain a triangle as a subgraph.}
Its proof ideas can be traced back to \cite{KSV}.
Our new ingredient is Lemma~\ref{A-B pathmodify}, which assembles the parts of the proof.

\begin{theo}\label{withoutK3}
	Let $k\geq 6$ be an integer.
	If $G$ is a $K_3$-free graph of chromatic number $k+1$, then $G$ contains $k$ cycles of consecutive lengths.
\end{theo}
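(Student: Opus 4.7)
My plan is to first reduce to the case where $G$ is $(k+1)$-critical. This gives $\delta(G)\ge k\ge 6$, $2$-connectedness, and non-bipartiteness, while preserving the $K_3$-free hypothesis. Following the spirit of \cite{KSV}, I would then fix a vertex $v$ (later chosen carefully, e.g.\ on a shortest odd cycle), let $L_i=\{u:d_G(u,v)=i\}$, and take $A:=\bigcup_{i\text{ even}}L_i$ and $B:=\bigcup_{i\text{ odd}}L_i$. Since $G$ is non-bipartite, $(A,B)$ is not a valid $2$-coloring of $G$, i.e., at least one edge is \emph{internal} (both endpoints in the same level); the $K_3$-free hypothesis forces every such internal edge to sit in $L_i$ with $i\ge 2$.

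The next step is to locate a cycle $C$ in $G$ of length at least $k+2$, which one can do by combining $\delta(G)\ge k$ with $2$-connectivity (say, by rotating a longest path), and then apply Lemma~\ref{A-B pathmodify} to this cycle with partition $(A,B)$. This will produce $A$-$B$ paths of every length $1,2,\ldots,|V(C)|-1$, hence of $k$ consecutive lengths. Each such $A$-$B$ path can be closed into a cycle through $v$ by appending BFS-tree paths from $v$ to its two endpoints: the parity of the partition ensures the appended BFS paths are compatible, and the total length of the resulting closed walk equals the length of the $A$-$B$ path plus the two BFS-distances to $v$. Ruling out degeneracies and reading off the lengths then gives the desired $k$ cycles of consecutive lengths.

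The hard part, as I see it, is precisely the length bookkeeping. Inspecting the proof of Lemma~\ref{A-B pathmodify}, the $A$-$B$ paths of successive lengths are nested subpaths of one long walk $Q'\cup C_{0,s}$ that begins with an internal edge and then continues along crossing edges; so while one endpoint is essentially pinned, the other shifts by one vertex at a time along the walk, alternating between $A$ and $B$. The BFS-distance from $v$ to this moving endpoint therefore need not increase by exactly one between successive lengths, which could collapse the construction to only \emph{admissible} (common difference $1$ or $2$) rather than strictly consecutive lengths. To overcome this I would take $v$ to lie on the cycle $C$ itself (for instance at one end of the chosen internal edge) and choose $C$ to be a shortest odd cycle, which is induced and has no chord; then the BFS-distances from $v$ to consecutive vertices of $C$ genuinely increase by one, and the moving endpoint's contribution to the total cycle length changes by exactly one at each step. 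The $K_3$-free assumption is essential here, both to forbid the short chord-like obstructions that would ruin this monotonicity and to justify the use of Lemma~\ref{A-B pathmodify} via the non-bipartiteness of $(A,B)$; and the hypothesis $k\ge 6$ supplies the room (via $\delta(G)\ge 6$) to guarantee that $C$ can be taken long enough and that the resulting $k$ cycles are all distinct.
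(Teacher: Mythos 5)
Your overall architecture (pass to a critical subgraph, produce a long cycle, apply Lemma~\ref{A-B pathmodify} to a partition $(A,B)$, and close the resulting $A$-$B$ paths into cycles via BFS-tree paths) is the right family of ideas, but the step you yourself flag as ``the hard part'' does not go through, and the fix you propose does not repair it. With $A$ and $B$ taken to be the even and odd BFS levels of the whole graph, the quantity you must control is the \emph{sum} of the distances from $v$ to the two endpoints of each $A$-$B$ path, and nothing forces this to be constant (or to vary in lockstep with the path length). Your repair has two independent problems. First, you want $C$ to be simultaneously a shortest odd cycle and of length at least $k+2$; in a triangle-free graph of large chromatic number the shortest odd cycle can have length $5$, so these demands are incompatible. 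Second, even if $C$ is induced and passes through $v$, the BFS distance from $v$ to a vertex of $C$ is only \emph{at most} its distance along $C$: inducedness forbids chords of $C$ but not shortcuts through $G-V(C)$, so the distances need not increase by one as the moving endpoint advances. There is also an unaddressed disjointness issue: the BFS-tree paths from $v$ to the two endpoints may meet the $A$-$B$ path (and each other) in internal vertices, yielding closed walks rather than cycles.

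The paper resolves exactly this bookkeeping problem by a different choice of where the $A$-$B$ paths live. One first finds a single level $L_t$ with $\chi(G[L_t])\geq \ell:=\lceil (k+1)/2\rceil$ and passes to an $\ell$-critical subgraph $H\subseteq G[L_t]$, which is $2$-connected, non-bipartite, $K_3$-free and of minimum degree at least $\ell-1$. A separate extremal lemma (Lemma~\ref{longcycle}) shows such an $H$ contains a cycle of length at least $2\ell$; the triangle-free hypothesis is what doubles the cycle length obtainable from minimum degree $\ell-1$, and this doubling is essential because $2\ell-1\geq k$ while a cycle of length about $\ell$ would be far too short. The partition is then taken \emph{inside} $H$: letting $r'$ be the root of the minimal subtree of the BFS tree spanning $V(H)$, one sets $A$ to be the vertices of $H$ descending from one fixed child of $r'$ and $B=V(H)-A$. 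This guarantees that for every $a\in A$ and $b\in B$ the tree path $T_{a,b}$ has the \emph{same} length $2h$ and meets $L_t$ only at $a$ and $b$, so Lemma~\ref{A-B pathmodify} applied to $H$ yields cycles of lengths $2h+1,\dots,2h+2\ell-1$, which are $2\ell-1\geq k$ consecutive lengths. In short, the missing ideas in your write-up are the reduction to a single BFS level (which is what makes the closing paths have constant length and be internally disjoint from the $A$-$B$ paths) and the long-cycle lemma for triangle-free graphs of roughly half the minimum degree.
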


To facilitate the use of Lemma~\ref{A-B pathmodify}, we need the following lemma.

\begin{lem}\label{longcycle}
	Let $k\geq 3$ be an integer. Let $G$ be a $2$-connected graph of minimum degree at least $k$.
	If $G$ is $K_3$-free, then $G$ contains a cycle of length at least $2k+2$, except that $G=K_{k,n}$ for some $n\geq k$.
\end{lem}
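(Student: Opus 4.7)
The plan is to argue by contradiction. Assume that $G$ is a $2$-connected $K_3$-free graph with $\delta(G)\ge k$, that $G\ne K_{k,n}$ for any $n\ge k$, and that $G$ contains no cycle of length $\ge 2k+2$. The first step is to secure a long cycle. Since $G$ is $K_3$-free, for any $v\in V(G)$ the neighborhood $N(v)$ is an independent set of size $\ge k$; moreover, any $u\in N(v)$ has all of its $\ge k$ neighbors in $V(G)\setminus(N(v)\cup\{u\})$, which forces $|V(G)|\ge 2k$. A classical theorem of Dirac then yields a cycle of length $\min(|V(G)|,2k)\ge 2k$. Let $C$ be a longest cycle, so $|C|=\ell\in\{2k,2k+1\}$.

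The next step is to exploit $K_3$-freeness against $C$. Any vertex $u\in V(G)\setminus V(C)$ cannot be adjacent to two consecutive vertices of $C$ (else a triangle arises), so its neighbors on $C$ occur at cyclic distance $\ge 2$. Hence $|N(u)\cap V(C)|\le\lfloor\ell/2\rfloor=k$, and the gaps (each $\ge 2$) between consecutive $u$-neighbors on $C$ sum to $\ell$. When $\ell=2k+1$, the sum $2k+1$ forces at least one gap of size $\ge 3$; combining this slack with the second disjoint $u$-to-$C$ path supplied by $2$-connectivity, or with a chord of $C$ that bridges the odd-cycle parity, yields a cycle of length $\ge 2k+2$, contradicting the longest-cycle assumption. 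Therefore $\ell=2k$, and equality in the gap bound now forces every off-cycle $u$ to have exactly $k$ neighbors on $C$, all gaps equal to $2$; so $N(u)\cap V(C)$ is one of the two alternating classes $A:=\{v_0,v_2,\ldots,v_{2k-2}\}$ or $B:=\{v_1,v_3,\ldots,v_{2k-1}\}$ of $C$, and $u$ has no off-cycle neighbors.

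Finally, I would establish the bipartite structure to conclude $G=K_{k,n}$. With $A$ and $B$ as above, each off-cycle vertex is completely joined to exactly one class, yielding an implicit bipartition of $V(G)$ extending $(A,B)$. One rules out (i) any chord of $C$ joining two vertices of the same class and (ii) any edge between two off-cycle vertices on opposite implicit sides: in each configuration, threading the extra edge through the alternating walk along $C$ together with a compatible off-cycle vertex produces a cycle of length $\ge 2k+2$. What remains is a complete bipartite graph with $|A|=k$ on one side and $|V(G)\setminus A|\ge k$ on the other, i.e., $G=K_{k,n}$ with $n\ge k$, contradicting our assumption. \emph{The main obstacle} is executing these extension arguments cleanly: in the $\ell=2k+1$ case and whenever a chord or an off-cycle edge is present, one must splice the surplus gap or extra edge with the alternating structure of $C$ and the $2$-connectivity paths precisely enough to gain the two units of length, rather than merely recovering a cycle of length $\le\ell$.
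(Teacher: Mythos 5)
Your route---take a longest \emph{cycle} $C$ secured by Dirac's circumference bound and analyse attachments to it---is genuinely different from the paper's, which analyses longest \emph{paths}; but as written it has two gaps that sit exactly where the difficulty of the lemma lies. First, the step ``equality in the gap bound now forces every off-cycle $u$ to have exactly $k$ neighbors on $C$ \dots\ and $u$ has no off-cycle neighbors'' does not follow. The gap argument only gives the upper bound $|N(u)\cap V(C)|\le k$; to combine it with $\delta(G)\ge k$ and get equality you must already know $N(u)\subseteq V(C)$, which is precisely what is being claimed. A priori $G-V(C)$ can contain large components attached to $C$ at only a few vertices, whose interior vertices have no neighbors on $C$ at all, and ruling this out needs a separate argument about long paths threaded through such components. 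This is exactly why the paper works with a longest path $P$: its endpoints automatically have \emph{all} their neighbors on $P$ (Claim \ref{neighbors}), and that control is the engine of the whole proof; a longest cycle gives you no analogous control over off-cycle vertices.

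Second, the elimination of $\ell=2k+1$ is asserted rather than proved, and the proposed mechanism cannot work: if $u\notin V(C)$ has neighbors $v_i,v_j$ splitting $C$ into arcs of lengths $d$ and $\ell-d$ with $d\ge 2$ (forced by $K_3$-freeness), the two cycles through $u$ have lengths $d+2$ and $\ell-d+2\le\ell$, so arc-replacement never yields a cycle of length $\ell+1=2k+2$; the ``gap of size $3$'' provides no usable slack in that direction. Worse, when $V(G)=V(C)$ there are no off-cycle vertices at all and the contradiction must come entirely from the chord structure of an odd Hamiltonian cycle; this is where the paper invests most of its effort (Claims \ref{endsarenotadjacent}--\ref{nottwo} and the ensuing case analysis, which ultimately locates a forbidden triangle), and your sketch offers no substitute. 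The final bipartite-structure step inherits the same problem in miniature: ruling out a same-class chord of $C$ requires splicing in an off-cycle vertex completely joined to the opposite class, which presupposes the unproved adjacency structure from the first gap. So the skeleton is plausible, but the missing pieces are the substance of the lemma rather than routine verifications.
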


\begin{proof}
	Suppose to the contrary that there exists a 2-connected graph $G$ with $\delta(G)\geq k$,
	which is not $K_{k,n}$ for any $n\geq k$ and does not contain a cycle of length at least $2k+2$.
	We first prove some properties about the longest paths in $G$.
	Let $P=v_0v_1\ldots v_{\ell}$ be any longest path in $G$ which has length $\ell$.
	
	\begin{claim}\label{neighbors}
		The neighborhood of $v_0$ is contained in $V(P)$. Further, $N_G(v_0)$ consists of $k$ vertices such that either (a) $\ell\geq 2k-1$ and $N_G(v_0)=\{v_{2i+1}: 0\leq i\leq k-1\}$,
		or (b) $\ell\geq 2k$ and $N_G(v_0)=\{v_1,v_3,\ldots, v_{s-3},v_s,v_{s+2},\ldots, v_{2k}\}$ for some even integer $4\leq s\leq 2k$.
	\end{claim}
	
	\begin{proof}[Proof of Claim \ref{neighbors}]
		Since $P$ is a longest path in $G$, it is clear that $N_G(v_0)\subseteq V(P)$.
		Let $v_{p_1},v_{p_2},\ldots, v_{p_t}$ be all neighbors of $v_0$ in $P$,
		where $1=p_1<p_2<\cdots< p_t$ and $t\geq k$.
		Since $G$ is $K_3$-free, $p_{i+1}-p_{i}\geq 2$ for each $i$.
		Also by the assumption we see $p_t\leq 2k$.
		The result now follows by an easy analysis. \end{proof}
	
	\begin{claim}\label{endsarenotadjacent}
		There is no cycle of length $\ell+1$ in $G$. Hence, $v_0v_\ell\notin E(G)$ and $\ell\geq 2k$.
	\end{claim}
	
	\begin{proof}[Proof of Claim \ref{endsarenotadjacent}]
		Suppose to the contrary that there is a cycle $C:=x_0x_1\ldots x_{\ell}x_0$,
		where the indices are taken under the additive group $\mathbb{Z}_{\ell+1}$.
		Since $C_{0,\ell}$ is a longest path in $G$ and $G$ is connected,
		we have $V(G)=V(C)$ (otherwise one can find a longer path, a contradiction).
		As $v_0v_\ell\in E(G)$, by Claim \ref{neighbors}, we see $2k-1\leq\ell\leq 2k$.
		Suppose that $\ell=2k-1$.
		For each $j\in\mathbb{Z}_{\ell+1}$,
		the longest path  $C_{j,j-1}$ always has Claim \ref{neighbors}(a) occur.
		So $N_G(x_j)=\{x_{j+2i-1}: i\in [k]\}$.
		This shows that $G$ is a complete bipartite graph $K_{k,k}$, a contradiction.
		
		Therefore $\ell=2k$. Consider the longest path $C_{j,j-1}$ for each $j$.
		Since $x_jx_{j-1}\in E(G)$, Claim \ref{neighbors}(b) must occur for the endpoint $x_j$ of $C_{j,j-1}$.
		So there exists some even integer $s_j\in \{4,5,\ldots,2k\}$
		such that $N_G(x_j)=\{x_{j+1},x_{j+3},\ldots, x_{j+s_j-3},x_{j+s_j},x_{j+s_j+2},\ldots, x_{j+2k}\}$
		where additions are taken under $\mathbb{Z}_{\ell+1}$.
		In particular, there exist some $x_t, x_{t+3}$ which have a common neighbor say $x_m$.
		Without loss of generality, we may assume $t=0$.
		Since $G$ is $K_3$-free, $x_0$ cannot be adjacent to $x_3$.
		This implies $s_0=4$ and thus $x_0$ is adjacent to both of $x_1, x_4$.
		By the same argument, we can further derive that $s_1=4$ and $x_1$ is adjacent to both of $x_2, x_5$.
		Continuing this, we conclude that $s_j=4$ for each $0\leq j\leq 2k$.
		This shows $x_0x_{\ell-3}\in E(G)$.
		However, as $s_0=4$, by Claim \ref{neighbors} we also see $x_0x_{\ell-2}\in E(G)$.
		Then $x_0x_{\ell-2}x_{\ell-3}$ forms a $K_3$ in $G$, a contradiction.
		This proves the claim.
	\end{proof}
	
	For a longest path $P=v_0v_1\ldots v_{\ell}$,
	we call $(v_{\alpha},v_{\beta})_P$ a {\it crossing pair},
	if $v_{\alpha}\in N_G(v_{\ell}), v_{\beta}\in N_G(v_0)$ and $v_{\gamma}\notin N_G(v_0)\cup N_G(v_{\ell})$ for each integer $\gamma\in (\alpha,\beta)$.
	Let $\beta-\alpha$ denote the {\it gap} of a crossing pair $(v_{\alpha},v_{\beta})_P$.
	
	\begin{claim}\label{crossing-pair}
		Each longest path $P$ has a crossing pair, and the gap of each crossing pair of $P$ is at least 2.
	\end{claim}
	\begin{proof}[Proof of Claim \ref{crossing-pair}]
		First, we show the second assertion.
		Let $(v_{\alpha},v_{\beta})_P$ be any crossing pair of $P$.
		Clearly we have $\beta-\alpha\geq 1$.
		If $\beta-\alpha=1$, then one can easily find a cycle of length $\ell+1$, a contradiction to Claim \ref{endsarenotadjacent}.
		Hence, the gap of each crossing pair of $P$ is at least two.
		
		Let $p$ be the maximum integer with $v_pv_0\in E(G)$ and $q$ be the minimum integer with $v_qv_\ell\in E(G)$.
		It will suffice to show that $p>q$.
		First suppose that $p<q$.
		By Claim~\ref{neighbors}, there are two vertex disjoint cycles $C_1,C_2$ of length at least $2k$ in $G$.
		Since $G$ is $2$-connected,
		there are two disjoint paths $L_1, L_2$ from $V(C_1)$ to $V(C_2)$.
		Then it is easy to find two cycles $D_1, D_2$ with $|D_1|+|D_2|=|C_1|+|C_2|+2(|L_1|+|L_2|)\geq 4k+4$,
		which gives a cycle of length at least $2k+2$, a contradiction.
		Now suppose $p=q$.
		Since $G$ is $2$-connected, there is a path $L$ from $v_s\in\{v_0,v_1,\ldots,v_{p-1}\}$ to $v_{t}\in\{v_{q+1},v_{q+2},\ldots,v_{\ell}\}$ in $G$ internally disjoint from $P-v_p$.
		Let $r\in (s,p]$ be the minimum integer with $v_0v_r\in E(G)$.
		Let $C_1'=v_0v_1\ldots v_sLv_tv_{t+1}\ldots v_{\ell}v_pv_{p-1}\ldots v_rv_0$ and $C_2'=v_0v_1\ldots v_sLv_tv_{t-1}\ldots v_rv_0$ be two cycles.
		By Claim~\ref{neighbors}, each of $C_1'$ and $C_2'$ contains at least $2k-1$ vertices in $\{v_0,\ldots, v_p\}$.
		Thus, one of $C_1'$ and $C_2'$ contains at least $(2k-1)+k\geq 2k+2$ vertices, where $k\geq 3$.
		This contradiction finishes the proof.
	\end{proof}

	\begin{claim}\label{nottwo}
		Let $(v_{\alpha},v_{\beta})_P$ be an arbitrary crossing pair of $P$. Then the gap of $(v_{\alpha},v_{\beta})_P$ equals two.
	\end{claim}
	
	\begin{proof}[Proof of Claim \ref{nottwo}]
		We have $\beta-\alpha\geq 2$ from the previous claim and $\beta-\alpha\leq 3$ from the definition of crossing pairs and Claim \ref{neighbors}.
		Let us suppose for a contradiction that $\beta-\alpha=3$.
		Note that this forces that both $v_{0}$ and $v_{\ell}$ satisfy Claim \ref{neighbors}(b).
		So $\alpha$ is odd, $\beta$ is even and $\ell-\beta$ is odd, implying that $\ell$ is odd.
		The cycle $v_0v_1\ldots v_{\alpha}v_{\ell}v_{\ell-1}\ldots v_{\beta}v_0$ has length $\ell-1$,
		so $\ell-1\leq 2k+1$, i.e., $\ell\leq 2k+2$.
		By Claim \ref{endsarenotadjacent}, we also see $\ell\geq 2k$.
		Combining with the above, we can conclude that $\ell=2k+1$.
		Therefore, $N_G(v_0)=N_G(v_\ell)=\{v_1,v_3,\ldots,v_{\alpha},v_{\beta},v_{\beta+2},\ldots, v_{2k}\}:=N$, where $\beta=\alpha+3$ and $|N|=k$.
		
		Consider the longest path $v_{\alpha+2}v_{\alpha+1}\ldots v_0v_{\beta}v_{\beta+1}\ldots v_{\ell}$.
		If $v_{\alpha+2}$ is adjacent to $v_{\alpha-1}$, then using $v_\ell v_\alpha\in E(G)$, we can find a cycle of length $\ell+1$, a contradiction.
		Hence by Claim~\ref{neighbors}, $N_G(v_{\alpha+2})=\{v_{\alpha+1}, v_{\alpha-2}, \ldots, v_1, v_\beta, v_{\beta+2}, \ldots, v_{2k}\}=(N\backslash \{v_\alpha, v_\beta\})\cup \{v_{\alpha+1},v_\beta\}$.
		Similarly, by considering the path $v_{\alpha+1}v_{\alpha+2}\ldots v_\ell v_{\alpha}v_{\alpha-1}\ldots v_{0}$,
		we can derive $N_G(v_{\alpha+1})=(N\backslash \{v_\alpha, v_\beta\})\cup \{v_{\alpha},v_{\alpha+2}\}$.
		Note that $|N|=k\geq 3$. So $v_{\alpha+1}$ and $v_{\alpha+2}$ have a common neighbor, which forces a triangle in $G$.
		This proves Claim~\ref{nottwo}.
	\end{proof}
	
	Therefore $\beta-\alpha=2$.
	Note that $v_0v_1\ldots v_{\alpha}v_{\ell}v_{\ell-1}\ldots v_{\beta}v_0$ is a cycle of length $\ell$.
	Hence, we have $2k\leq \ell\leq 2k+1$.
	First suppose that $\ell=2k+1$.
	If $v_0$ satisfies Claim \ref{neighbors}(b), then there exists $s$ such that $v_{s-3},v_s$ are neighbors of $v_0$ for some even number $4\leq s\leq 2k$.
	By Claim \ref{neighbors}, since $s\geq 4$, $v_{\ell}$ must have a neighbor in $\{v_{s-3},v_{s-2},v_{s-1}\}$.
	By Claim \ref{endsarenotadjacent}, $v_{\ell}$ is not adjacent to $v_{s-1}$ and $v_{s-4}$.
	Since $G$ is $K_3$-free, exactly one of $v_{s-3}$ and $v_{s-2}$ can be adjacent to $v_{\ell}$.
	By Claim \ref{nottwo}, $v_{\ell}$ is not adjacent to $v_{s-3}$.
	Therefore, $v_{\ell}v_{s-2}\in E(G)$.
	Since $G$ is $K_3$-free, $v_{s-1}$ is not adjacent $v_{s-3}$.
	Suppose $v_{s-1}$ is adjacent to $v_{s-4}$, then $v_0v_1\ldots v_{s-4}v_{s-1}\ldots v_{\ell}v_{s-2}v_{s-3}v_{0}$ is a cycle of length $\ell+1$, contradicting Claim \ref{endsarenotadjacent}.
	So, both $v_{s-1}$ and $v_{\ell}$ are not adjacent to $v_{s-4}$ and $v_{s-3}$.
	Then the longest path $v_{s-1}v_{s-2}\ldots v_0v_{s}v_{s+1}\ldots v_{\ell}$ would contain a crossing pair of gap at least $3$, contradicting Claim \ref{nottwo}.
	Therefore, by symmetry, we may assume that both $v_0$ and $v_{\ell}$ satisfy Claim \ref{neighbors}(a).
	So, we have $N_G(v_0)=\{v_1,v_3,\ldots, v_{2k-1}\}$ and $N_G(v_{\ell})=\{v_{\ell-1},v_{\ell-3},\ldots, v_{\ell-2k+1}\}$.
	Then $v_0v_1\ldots v_{2k-2}v_{2k+1}v_{2k}v_{2k-1}v_0$ is a cycle of length $\ell+1$, contradicting Claim \ref{endsarenotadjacent}.
	
	Hence we have $\ell=2k$.
	In this case, $N_G(v_0)=N_G(v_{2k})=\{v_1, v_3, \ldots, v_{2k-1}\}:=Y$. Let $X=V(P)\backslash Y$.
	For each even integer $2\leq j\leq 2k-2$, $R_j:=v_jv_{j-1}\ldots v_0v_{j+1}v_{j+2}\ldots v_{2k}$ is a longest path in $G$.
	By Claim \ref{neighbors}(a), we have $N_G(v_j)=Y$ for each even $j$.
	As $G$ is $K_3$-free, $G[V(P)]$ consists of a copy $K_{k,k+1}$ with two parts $X$ and $Y$.
	For every vertex $w$ of $G-V(P)$, if $w$ has a neighbor in $X$, then there is a path of length $2k+1$ in $G$, a contradiction. Hence,  $N_{V(P)}(G-V(P))\subseteq Y$. Suppose that $G-V(P)$ contains an edge $e$. Since $G$ is $2$-connected, there exist two vertex disjoint paths $T_1$ and $T_2$ from $V(P)$ to $V(e)$ internally disjoint from $V(P)$. Since $N_{V(P)}(G-V(P))\subseteq Y$ and G[V(P)] is a complete bipartite graph. Without loss of generality, we may assume that $V(T_1)\cap V(P)=\{v_1\}$ and $V(T_2)\cap V(P)=\{v_3\}$. Then $v_0v_1\cup T_1\cup e \cup T_2 \cup v_3v_4\ldots v_{2k}$ is a path of length at least $2k+1$, a contradiction. Therefore, $G-V(P)$ forms an independent set and every vertex of $G-V(P)$ can only be adjacent to vertices in $Y$.
	Since $\delta(G)\geq k$ and $|Y|=k$, we see that $N_G(v)=Y$ for each $v\in V(G)-Y$.
	Hence $G$ is a complete bipartite graph $K_{k,m}$ for some $m$, a contradiction.
	This completes the proof of Lemma \ref{longcycle}.
\end{proof}

We remark that Lemma \ref{longcycle} is best possible by the following examples.
For any integers $k\geq 3$ and $m\geq 2k$,
let $K$ denote a complete bipartite graph $K_{k-1,m}$ with two parts $X$ and $Y$, where $|X|=k-1$ and $Y=Y_1\cup Y_2$ has size $m$ with $|Y_i|\geq k$ for $i\in \{1,2\}$.
Let $G_{k,m}$ be the graph obtained from $K$ by adding two new vertices $x_1, x_2$ and edges in $\{x_1x_2, x_1u, x_2v: \forall u\in Y_1, \forall v\in Y_2\}$.
We see that $G_{k,m}$ is a $2$-connected $K_3$-free graph of minimum degree at least $k$, whose longest cycles have length $2k+2$.

\medskip

Now we are ready to prove Theorem~\ref{withoutK3}.
Let $T$ be a tree with root $r$.
For $a,b\in V(T)$, let $T_{a,b}$ be the unique path between $a$ and $b$ in $T$.

\begin{proof}[\bf Proof of Theorem~\ref{withoutK3}]
	Let $k\geq 6$ and $G$ be a $K_3$-free graph of chromatic number $k+1$.
	Fix a vertex $r$ and let $T$ be the breadth first search tree in $G$ with root $r$.
	Let $L_0=\{r\}$ and $L_i$ be the set of vertices of $T$ at distance $i$ from its root $r$ for $i\geq 1$.
	There exists some $t\geq 1$ such that $G[L_t]$ has chromatic number at least $\ell:=\lceil (k+1)/2\rceil$ where $\ell\geq 4$.
	Let $H$ be a $\ell$-critical subgraph of $G[L_t]$.
	So $H$ is a $2$-connected non-bipartite $K_3$-free graph of minimum degree at least $\ell-1\geq 3$.
	By Lemma \ref{longcycle}, $H$ contains a cycle of length at least $2\ell$.
	Let $T'$ be the minimal subtree of $T$ whose set of leaves is precisely $V(H)$, and let $r'$ be the root of $T'$.
	Let $h$ denote the distance between $r'$ and vertices in $H$ in $T'$.
	Since $G$ is $K_3$-free, $h\geq 2$.
	By the minimality of $T'$, $r'$ has at least two children in $T'$.
	Let $x$ be one of its children.
	Let $A$ be the set of vertices in $H$ which are the descendants of $x$ in $T'$ and let $B=V(H)-A$.
	Then both $A,B$ are nonempty and for any $a\in A$ and $b\in B$, $T_{a,b}$ has the same length $2h$.
	By Lemma \ref{A-B pathmodify}, there are $2\ell-1$ paths of $H$ from a vertex of $A$ to a vertex of $B$ of length $1,2,\ldots, 2\ell-1$, respectively.
	Putting all together, we see that $G$ contains $2\ell-1=2\lceil (k+1)/2\rceil-1\geq k$ cycles of consecutive lengths.
\end{proof}

\section{Graphs of chromatic number $6$}\label{SEC678}
As a further exploration of the proof of Theorem~\ref{withoutK3}, we now consider consecutive cycles in graphs of chromatic number $6$.
The following is the main result of this section.

\begin{theo}\label{678}
Every graph of chromatic number six contains four cycles of consecutive lengths which start with an odd number.
\end{theo}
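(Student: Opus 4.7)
The plan is to split the proof into two cases depending on whether $G$ contains a triangle. In the triangle case, we may assume $G$ is $6$-critical (hence $2$-connected with $\delta(G)\geq 5$) and apply Theorem \ref{containK3} with $k=5$: this yields either five cycles of consecutive lengths $\ell,\ell+1,\ldots,\ell+4$ or the exceptional case $G=K_6$. In the former, whichever of $\ell$ or $\ell+1$ is odd supplies four consecutive cycles starting at an odd length; in the latter, the lengths $3,4,5,6$ in $K_6$ work directly.

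For the triangle-free case we adapt the BFS strategy from Theorem \ref{withoutK3}. We fix a root $r$, construct a BFS tree $T$ with layers $L_i$, and choose $t$ with $\chi(G[L_t])$ maximum; since $\chi(G)\leq 2\max_t\chi(G[L_t])$ we have $\chi(G[L_t])\geq 3$. The subtle point is that the parameter $\lceil(k+1)/2\rceil$ used in Theorem \ref{withoutK3} is only $3$ for $k=5$, and a $3$-critical $K_3$-free subgraph is merely an odd cycle of minimum degree $2$, so Lemma \ref{A-B pathmodify} does not apply directly.

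We will subdivide by the maximum layer chromatic number. If some layer achieves $\chi(G[L_t])\geq 4$, take $H\subseteq G[L_t]$ to be $4$-critical: then $H$ is $2$-connected, non-bipartite, $K_3$-free with $\delta(H)\geq 3$, Lemma \ref{longcycle} supplies a cycle of length $\geq 8$ in $H$ (the $K_{3,n}$ exception is bipartite, hence excluded by $\chi(H)\geq 4$), and Lemma \ref{A-B pathmodify} applied with the BFS-induced partition $(A,B)$ of $V(H)$ yields $A$-$B$ paths of all lengths $1,\ldots,7$; coupled with the common tree-path length $2h$ this gives seven consecutive cycles starting at the odd length $2h+1$, from which the required four are extracted. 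Otherwise $\chi(G[L_t])=3$, and we take $H$ to be a shortest odd cycle $C_{2m+1}$ in $G[L_t]$ with $m\geq 2$; when $|V(H)|\in\{5,7\}$, any non-trivial BFS-induced partition admits $A$-$B$ paths on $H$ of all lengths $1,2,3,4$ (by direct inspection on $C_5$, and by the primality of $7$ for $C_7$), giving the four consecutive cycles of lengths $2h+1,\ldots,2h+4$.

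The main obstacle is the residual sub-case $|V(H)|\geq 9$ with an arithmetic partition (for example $A=\{v_0,v_3,v_6\}$ when $|V(H)|=9$), where some length in $\{1,2,3,4\}$ is unattainable on $H$ alone. To handle it, we plan to exploit $\delta(G)\geq 5$: every $v\in V(H)$ has at least three neighbors outside $V(H)$, lying in $L_{t-1}\cup L_{t+1}$ by the BFS property. A careful analysis of these external neighbors should either provide an auxiliary short cycle or chord supplying the missing $A$-$B$ path lengths, or show that the arithmetic structure of $(A,B)$ can be broken by alternative choices of the root $r$, the cycle $H$, or the subtree child $x$ of $r'$. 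Residual highly symmetric configurations are then expected to be excluded by combining the minimum-degree, $K_3$-freeness, and chromatic hypotheses.
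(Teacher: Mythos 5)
Your triangle case and your $\chi(G[L_t])\geq 4$ case are both correct and coincide with the paper's treatment (reduce to a $6$-critical graph, apply Theorem~\ref{containK3}, then run the BFS argument of Theorem~\ref{withoutK3} with $\ell=4$ to get seven consecutive cycles starting at $2h+1$). You have also correctly isolated the genuine difficulty: when every layer component is at most $3$-chromatic, the BFS-induced partition $(A,B)$ of an odd cycle $C$ in a layer can be $3$-periodic (the paper's version is $v_{3i},v_{3i+1}\in A$, $v_{3i+2}\in B$), in which case an $A$-$B$ path of length $3$ is missing and the cycle $C$ alone only yields lengths in $\{2h+1,2h+2,2h+4,2h+5,2h+7,2h+8\}$.

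However, at exactly this point your proposal stops being a proof: ``a careful analysis of these external neighbors should either provide an auxiliary short cycle or chord \dots or show that the arithmetic structure \dots can be broken,'' and ``residual highly symmetric configurations are then expected to be excluded'' are statements of hope, not arguments, and this residual case is the entire content of the paper's Section on chromatic number six. More importantly, the paper does \emph{not} resolve it by producing the four cycles inside such a configuration. Instead it turns the argument around: assuming the four cycles do not exist, it proves a structural claim --- for adjacent non-bipartite components $H_1\subseteq G[L_i]$ and $H_2\subseteq G[L_{i+1}]$, every vertex of $N_{H_1}(H_2)$ is ``good'' in $H_1$, i.e.\ lies outside the minimal connected subgraph containing all $2$-connected blocks of $H_1$ (the proof of this claim is where the periodic-partition property $(\star)$ and the neighbors in the next layer are actually exploited, via a three-case analysis of where $N_{L_i}(H_2)$ sits relative to $A$, $B$ and the rest of the layer). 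This claim then feeds an explicit proper $5$-coloring: bipartite components get two colors by parity of the layer, non-bipartite components get three colors with the third color confined to bad vertices, and the claim guarantees no conflict across consecutive layers. That contradicts $6$-criticality. This coloring mechanism --- deriving a contradiction with the chromatic hypothesis rather than exhibiting the cycles --- is the missing idea in your proposal, and without it (or a genuine substitute) the hard subcase remains open.
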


\begin{proof}
It suffices to consider $6$-critical graphs $G$.
Suppose that $G$ does not contain four cycles of lengths $2m+1, 2m+2, 2m+3$, and $2m+4$ for any integer $m$.
Since $G$ is 2-connected with $\delta(G)\geq 5$, by Theorem \ref{containK3}, we may assume that $G$ is $K_3$-free.
Fix a vertex $r$ and let $T$ be the breadth first search tree in $G$ with root $r$.
Let $L_0=\{r\}$ and $L_i$ be the set of vertices of $T$ at distance $i$ from its root $r$.

We first show that every component of $G[L_i]$ for $i\geq0$ has chromatic number at most 3.
Suppose to the contrary that there exists a component $D$ of $G[L_t]$ which has chromatic number at least $4$ for some $t$.
Then using the exactly same arguments as in the proof of Theorem~\ref{withoutK3} (taking $\ell=4$ therein).\footnote{ We remark that this method applies only to $4$-critical graphs, as Lemma~\ref{A-B pathmodify} does not hold for $3$-critical graphs.}
one can derive that $G$ contains $2\ell-1=7$ cycles of consecutive lengths, a contradiction to our assumption.

We now prove a claim which is key for this proof.
For a connected graph $D$, a vertex $x\in V(D)$ is called {\it good} if it is not contained in the minimal connected subgraph of $D$ which contains all $2$-connected blocks of $D$, and {\it bad} otherwise.

\begin{cl}
Let $H_1$ be a non-bipartite component of $G[L_i]$ and $H_2$ be a non-bipartite component of $G[L_{i+1}]$ for some $i\geq 1$. If $N_{H_1}(H_2)\neq\emptyset$,
then every vertex in $N_{H_1}(H_2)$ is a good vertex of $H_1$.
\end{cl}

\begin{proof}[Proof of Claim]
Suppose that there exists a bad vertex $v$ of $H_1$ which has a neighbor $u$ in $H_2$.
Let $T'$ be the minimal subtree of $T$ whose set of leaves is precisely $V(H_1)$,
and let $r'$ be the root of $T'$.
Let $h$ denote the distance between $r'$ and vertices in $H_1$ in $T'$.
Since $G$ is $K_3$-free, $h \geq 2$.
By the minimality of $T'$, $r'$ has at least two children in $T'$.
Fix a child $x$ of $r'$ in $T'$ and let $Y$ be the set of the children of $r'$ in $T'$ other than $x$.
Let $A$ be the set of vertices in $H_1$ which are the descendants of $x$ in $T'$ and let $B=V(H_1)-A$. Note that every vertex in $B$ is a descendant of a vertex in $Y$ in $T'$.
Let $A'$ be the set of vertices in $L_i-A$ which are the descendants of $x$ in $T$.
Let $B'$ be the set of vertices in $L_i-B$ which are the descendants of $Y$ in $T$.
Let $M:=L_i-(A\cup A'\cup B\cup B')$.
Note that $A,A',B,B'$ and $M$ form a partition of $L_i$.

Let $C=v_0v_1\ldots v_n$ be an odd cycle of $H_1$, where $n\geq 4$.
Suppose that $V(C)\subseteq A$.
Let $b$ be a vertex in $B$.
Since $H_1$ is connected, there exists a path $P$ from $b$ to $V(C)$ internally disjoint from $V(C)$.
Without loss of generality, we assume that $V(P)\cap V(C)=\{v_0\}$.
Then $P\cup C_{0,i}\cup T_{b,v_i}$ for $i=0,1,...,4$ gives $5$ cycles of consecutive lengths, a contradiction.
Therefore, $B\cap V(C)\neq\emptyset$, and similarly, $A\cap V(C)\neq\emptyset$.
Then there must be an $A$-$B$ path of length $4$ in $C$
(otherwise, since $4$ and $|C|$ is co-prime and $|C|\geq 5$, one can deduce that all vertices of $C$ are contained in one of the two parts $A$ and $B$, a contradiction).

Since $C$ is an odd cycle, we may assume that $v_0,v_1\in A$ and $v_2\in B$.
Then $T_{v_1,v_2}\cup v_2v_1,\ T_{v_0,v_2}\cup v_2v_1v_0$ are two cycles of lengths $2h+1,2h+2$, respectively.
We have showed that there exists some $A$-$B$ path of length $4$ in $C$ which gives a cycle of length $2h+4$,
so we may assume that there is no $A$-$B$ path of length $3$ in $C$.
This would force that $v_{3i}, v_{3i+1}\in A$ and $v_{3i+2}\in B$ for each possible $i\geq 0$.
So $|C|\geq 9$ and $G$ contains a cycle of length $\ell\in\{2h+1,2h+2,2h+4,2h+5,2h+7,2h+8\}$.
Moreover, for any path $P'=u_0u_1\ldots u_m$ in $H_1$ with $u_0\in V(C)$ and $(V(P)-u_0)\cap V(C)=\emptyset$,
we can derive that $u_j\in B$ if $j\equiv 0$ modulo $3$ and $u_j\in A$ if $j\equiv 1$ or $2$ modulo $3$; call this property $(\star)$.
In particular, since $H_1$ is connected, for any vertex $b\in B$, there exists a path of length 2 in $H_1$ from $b$ to some vertex in $A$.

\medskip
\noindent\textbf{Case 1.} The component $H_2$ has a neighbor in $M$.

Note that every vertex of $H_2$ has a neighbor in $L_i$.
Suppose that there exists a vertex $c\in V(H_2)$ which has a neighbor $c'$ in $M$.
Recall that $v$ is a bad vertex in $H_1$ and let $u\in N_{H_2}(v)$.
Clearly there exists a path $z_1z_2z_3z_4z_5$ of length $4$ in $H_1$ with $z_1=v$.
It is easy to see that $T_{z_i,c}$ contains $r'$ for $i\in[5]$, so they have the same length.
Let $P''$ be a fixed path from $u$ to $c$ in $H_2$.
Then $P''\cup uz_1z_2\ldots z_i \cup T_{z_i,c'}\cup cc'$, for $i\in[5]$ are $5$ cycles of consecutive lengths in $G$, a contradiction.

\medskip
\noindent\textbf{Case 2.} The component $H_2$ has a neighbor in $B\cup B'$ where $N_M(H_2)=\emptyset$.

Suppose that $N_{B\cup B'}(H_2)\neq\emptyset$.
If $N_{A\cup A'}(H_2)\neq\emptyset$, then since $H_2$ is connected and every vertex of $H_2$ has a neighbor in $A\cup A'\cup B\cup B'$,
there exist two adjacent vertices $p,q$ of $H_2$ such that $p$ has a neighbor $p'$ in $A\cup A'$ and $q$ has a neighbor $q'$ in $B\cup B'$.
Then $p'pqq'\cup T_{p',q'}$ is a cycle of length $2h+3$.
It follows that $G$ contains $4$ cycles of lengths $2h+1,2h+2,2h+3,2h+4$, respectively.
Therefore $N_{A\cup A'}(H_2)=\emptyset$.
Since $N_{A\cup B}(H_2)\neq\emptyset$, we have that $v\in B$.
Let $u$ be any vertex in $N_{H_2}(v)$.
Choose $w_1\in V(H_2)$ such that there exists a path $Q$ of length $2$ from $u$ to $w_1$ in $H_2$.
Let $w_2$ be a neighbor of $w_1$ in $B\cup B'$.
Suppose that $w_2\neq v$.
We have showed that there exists a path $R$ of length $2$ in $H_1$ from $v$ to some vertex say $v'$ in $A$.
Then $R \cup vu \cup Q \cup w_1w_2\cup T_{w_2,v'}$ is a cycle of length $2h+6$.
So $G$ contains cycles of lengths $2h+5,2h+6,2h+7,2h+8$, a contradiction.
Therefore $w_2=v$ and $w_1\in N_{H_2}(v)$. That says, every vertex in $H_2$ of distance $2$ from a neighbor of $v$ is a neighbor of $v$.
Continuing to apply this along with a path from $u$ to an odd cycle $C_0$ in $H_2$,
we could obtain that $v$ is adjacent to all vertices of $C_0$, which contradicts that $G$ is $K_3$-free.

\medskip
\noindent\textbf{Case 3.} Neighbors of $H_2$ which belong to $L_i$ belong to $A\cup A'$.

Now we see that $N_{L_i}(H_2)\subseteq A\cup A'$.
This forces that $v\in A$.
For any neighbor $u$ of $v$ in $H_2$,
let $w_3\in V(H_2)$ satisfies that there exists a path $Q'$ of length $2$ from $u$ to $w_3$ in $H_2$.
By property $(\star)$ and the fact that $v\in A$ is bad in $H_1$, we can infer that there exists a path $t_3vt_1t_2$ in $H_1$ such that $t_1\in A$ and $t_2,t_3\in B$.
Note that $v$ and $t_1$ are symmetric.
Let $w_4$ be a neighbor of $w_3$ in $A\cup A'$.
Suppose that $w_4\notin\{v,t_1\}$.
Then $vu\cup Q'\cup w_3w_4\cup T_{w_4,t_2}\cup t_2t_1v$ is a cycle of length $2h+6$.
So again, $G$ contains cycles of lengths $2h+5,2h+6,2h+7,2h+8$, a contradiction.
Therefore, $w_4\in\{v,t_1\}$. That is, every vertex in $H_2$ of distance $2$ from a neighbor of $v$ or $t_1$ is adjacent to one of $v,t_1$.
Continuing to apply this along with a path from $u$ to an odd cycle $C_1$ in $H_2$,
we could obtain that every vertex of $C_1$ is adjacent to one of $v,t_1$.
But this would force a $K_3$ in $G$.
This final contradiction completes the proof of this claim.
\end{proof}

Now, we define a coloring $c:V(G)\rightarrow \{1,2,3,4,5\}$ as following.
Let $D$ be any bipartite component of $G[L_i]$ for some $i$.
If $i$ is even, we color one part of $D$ with color $1$ and the other part with color $2$,
and if $i$ is odd, we color one part of $D$ with color $4$ and the other part with color $5$.
Let $F$ be any non-bipartite component of $G[L_j]$ for some $j$.
If $j$ is even, by using the block structure of $F$, we can properly color $V(F)$ with colors $1,2$ and $3$ by coloring bad vertices with colors $1,2$ and $3$ and coloring good vertices with colors $1$ and $2$.
If $j$ is odd, then we also can properly color $V(F)$ with colors $3, 4$ and $5$ by coloring bad vertices with colors $3,4$ and $5$ and coloring good vertices with colors $4$ and $5$.

Next, we argue that $c$ is a proper coloring on $G$.
Let $H_1$ be a component of $G[L_i]$ and $H_2$ be a component of $G[L_{i+1}]$ for $i\geq0$ such that there exists an edge between $H_1$ and $H_2$.
If one of them is bipartite, then $c$ is proper on $V(H_1)\cup V(H_2)$ .
Therefore, both $H_1$ and $H_2$ are non-bipartite.
By the above claim, all vertices of $H_2$ are not adjacent to vertices of color $3$ in $H_1$.
It follows that $c$ is proper on $V(H_1)\cup V(H_2)$.
Therefore, $c$ is a proper $5$-coloring of $G$, which contradicts that $G$ is $6$-critical.
This completes the proof of Theorem \ref{678}.
\end{proof}

\section{Proofs of Theorems \ref{mainresult} and \ref{thmchromatic odd}}\label{SECmain}

We conclude with the proofs of Theorems \ref{mainresult} and \ref{thmchromatic odd}.

\begin{proof}[\bf Proof of Theorem \ref{mainresult}]
Let $G'$ be a $(k+1)$-critical subgraph of $G$.
Then $G'$ is a $2$-connected graph of minimum degree at least $k$.
If $G'$ is $K_3$-free, then by Theorem \ref{withoutK3}, $G'$ contains $k$ cycles of consecutive lengths.
If $G'$ contains a $K_3$, then by Theorem \ref{containK3}, either $G'$ contains $k$ cycles of consecutive lengths or $G'$ is $K_{k+1}$.
In the latter case, let $B$ be the block of $G$ containing $G'$.
If there exists $x\in V(B)\backslash V(G')$,
then there are two internally disjoint paths from $x$ to two vertices in $G'$ and then we can easily find $k$ cycles of consecutive lengths.
Therefore, the block $B$ of $G$ is a copy of $K_{k+1}$.
\end{proof}

\begin{proof}[\bf Proof of Theorem \ref{thmchromatic odd}.]
This follows from Theorems \ref{mainresult} and \ref{678}.
\end{proof}


\begin{thebibliography}{99}
\bibitem{BS}
J. Bondy and M. Simonovits,
\newblock{Cycles of even length in graphs},
\newblock{\emph{J. Combin. Theory Ser. B}} \textbf{16} (1974), 97--105.


\bibitem{BV98}
J. A. Bondy and A. Vince,
\newblock{Cycles in a graph whose lengths differ by one or two},
\newblock{\emph{J. Graph Theory}} \textbf{27} (1998), 11--15.

\bibitem{CMZ2015}
Z. Chen, J. Ma, and W. Zang,
\newblock{Coloring digraphs with forbidden cycles},
\newblock{\emph{J. Combin. Theory Ser. B}} \textbf{115} (2015), 210--223.

\bibitem{CS}
G. Chen and A. Saito,
\newblock{Graphs with a cycle of length divisible by three},
\newblock{\emph{J. Combin. Theory Ser. B}} \textbf{60} (1994), 277--292.

\bibitem{Erd92}
P. Erd\H{o}s,
\newblock{Some of my favourite problems in various branches of combinatorics},
\newblock{\emph{Matematiche (Catania)}} \textbf{47} (1992), 231--240.

\bibitem{EH66}
P. Erd\H{o}s and A. Hajnal,
\newblock{On chromatic number of graphs and set-systems},
\newblock{\emph{Acta Math. Acad. Sci. Hungar.}} \textbf{17} (1966), 61--99.

\bibitem{Fan02}
G. Fan,
\newblock{Distribution of cycle lengths in graphs},
\newblock{\emph{J. Combin. Theory Ser. B}} \textbf{84} (2002), 187--202.



\bibitem{GHLM}
J. Gao, Q. Huo, C. Liu and J. Ma,
\newblock{A unified proof of conjectures on cycle lengths in graphs},
\newblock{\emph{Int. Math. Res. Not.}}, to appear.

\bibitem{Gyarfas}
A. Gy\'arf\'as,
\newblock{Graphs with $k$ odd cycle lengths},
\newblock{\emph{Discrete Math.}} \textbf{103} (1992), 41--48.

\bibitem{HS98}
R. H\"aggkvist and A. Scott,
\newblock{Arithmetic progressions of cycles},
\newblock{\emph{Technical Report}} No. 16 (1998), Matematiska Institutionen, Ume\r{a}Universitet.

\bibitem{H21}
Q. Huo,
\newblock{A note on cycle lengths in graphs of chromatic number five and six},
arXiv: 2104.01382

\bibitem{KSV}
A. Kostochka, B. Sudakov and J. Verstra\"ete,
\newblock{Cycles in triangle-free graphs of large chromatic number},
\newblock{\emph{Combinatorica.}} \textbf{37} (2017), 481--494.

\bibitem{LM}
C. Liu and J. Ma,
\newblock{Cycle lengths and minimum degree of graphs},
\newblock{\emph{J. Combin. Theory Ser. B}} \textbf{128} (2018), 66--95.

\bibitem{LY01}
M. Lu and Z. Yu,
\newblock{Cycles of length 1 modulo 3 in graph},
\newblock{\emph{Discrete Applied Mathematics}} \textbf{113} (2001), 329--336.

\bibitem{Ma}
J. Ma,
\newblock{Cycles with consecutive odd lengths},
\newblock{\emph{European J. Combin.}} \textbf{52} (2016), 74--78.

\bibitem{MS04}
P. Mih\'ok and I. Schiermeyer,
\newblock{Cycle lengths and chromatic number of graphs},
\newblock{\emph{Discrete Math.}} \textbf{286} (2004), 147--149.

\bibitem{MW}
B. Moore and D. B. West,
\newblock{Cycles in color-critical graphs},
\newblock{arXiv:1912.03754v2}.

\bibitem{S92}
A. Saito,
\newblock{Cycles of length 2 modulo 3 in graphs},
\newblock{\emph{Discrete Mathematics}} \textbf{101} (1992), 285--289.

\bibitem{SV08}
B. Sudakov and J. Verstra\"ete,
\newblock{Cycle lengths in sparse graphs},
\newblock{\emph{Combinatorica}} \textbf{28} (2008), 357--372.

\bibitem{SV17}
B. Sudakov and J. Verstra\"ete,
\newblock{The extremal function for cycles of length $l$ mod $k$},
\newblock{\emph{Elec. J. of Combin.}} \textbf{24(1)} (2017), \#P1.7



\bibitem{Tuza92}
Zs. Tuza,
\newblock{Graph coloring in linear time},
\newblock{\emph{J. Combin. Theory Ser. B}} \textbf{55} (1992), no. 2, 236--243.

\bibitem{V16}
J. Verstra\"ete,
\newblock{Extremal problems for cycles in graphs},
In \emph{Recent Trends in Combinatorics}, A. Beveridge et al. (eds.), The IMA Volumes in Mathematics and its Applications 159, 83--116, Springer, New York, 2016.

\bibitem{V00}
J. Verstra\"ete,
\newblock{On arithmetic progressions of cycle lengths in graphs},
\newblock{\emph{Combin. Probab. Comput.}} \textbf{9} (2000), 369--373.

\end{thebibliography}
\end{document}